\newtheorem{THM}{Theorem}
\newtheorem{LMA}[THM]{Lemma}
\newtheorem{PROP}[THM]{Proposition}
\newtheorem{CORO}[THM]{Corollary}
\numberwithin{equation}{section}
\newcommand{\showon}{\begin{align*}}
\newcommand{\showoff}{\end{align*}}
\newcommand{\none}{\varnothing}
\newcommand{\drop}{\smallsetminus}
\newcommand{\symdiff}{\bigtriangleup}
\newcommand{\goesto}{\rightarrow}
\newcommand{\one}{\boldsymbol{1}}
\newcommand{\zero}{\boldsymbol{0}}
\newcommand{\Sym}{\mathfrak{S}}
\newcommand{\la}{\langle}
\newcommand{\ra}{\rangle}
\newcommand{\ev}[1]{\la #1 \ra} 
\newcommand{\lb}{\lbrack}
\newcommand{\rb}{\rbrack}
\newcommand{\A}{\EuScript{A}} \renewcommand{\a}{\mathbf{a}}
 \newcommand{\CC}{\mathbb{C}}
\newcommand{\G}{\mathfrak{G}} 
\renewcommand{\L}{\EuScript{L}}
\newcommand{\M}{\EuScript{M}}
\renewcommand{\P}{\EuScript{P}}
 \newcommand{\RR}{\mathbb{R}}
 \renewcommand{\S}{\EuScript{S}}
\newcommand{\U}{\EuScript{U}}\renewcommand{\u}{\mathbf{u}}
\newcommand{\V}{\EuScript{V}}\renewcommand{\v}{\mathbf{v}}
\newcommand{\w}{\mathbf{w}}
 \newcommand{\x}{\mathbf{x}}
 \newcommand{\y}{\mathbf{y}}
 \newcommand{\ZZ}{\mathbb{Z}}
\begin{document}

\title[Highly symmetric matroids]{Highly symmetric matroids, the strong Rayleigh
property, and sums of squares}

\author{Wenbo Gao}
\thanks{Research supported by an NSERC Undergraduate Research Award.}
\address{Department of Combinatorics and Optimization,\
University of Waterloo,\
Waterloo, Ontario, Canada\ \ N2L 3G1}
\email{\texttt{w6gao@math.uwaterloo.ca}}

\author{David G. Wagner}
\thanks{Research supported by NSERC Discovery Grant OGP0105392.}
\address{Department of Combinatorics and Optimization,\
University of Waterloo,\
Waterloo, Ontario, Canada\ \ N2L 3G1}
\email{\texttt{dgwagner@math.uwaterloo.ca}}

\keywords{matroid; half-plane property; stable polynomial; Rayleigh difference,
positive semidefinite form; Johnson scheme.}
\subjclass[2010]{05B35;\ 14P10, 05E18, 05E30.}

\begin{abstract}
We investigate the strong Rayleigh property of matroids for which the
basis enumerating polynomial is invariant under a Young subgroup of the symmetric
group on the ground set.  In general, the Grace-Walsh-Szeg\H{o} theorem can be
used to simplify the problem.  When the Young subgroup has only two orbits, such
a matroid is strongly Rayleigh if and only if an associated univariate polynomial has only
real roots.  When this polynomial is quadratic we get an explicit structural
criterion for the strong Rayleigh property.  Finally, if one of the orbits has
rank two then the matroid is strongly Rayleigh if and only if the Rayleigh difference
of any two points on this line is in fact a sum of squares.
\end{abstract}

\maketitle

\begin{center}
\textbf{DRAFT IN PROGRESS}
\end{center}

\section{Introduction.}

The strong Rayleigh property of a matroid is a real semi-algebraic condition which is motivated by its
connection with abstractions of physical properties of electrical networks
\cite{Br1,Br2,BW,COSW,CW,KPV,W,WW}.
Br\"and\'en \cite{Br1} shows that this condition is equivalent to ``stability''
or the ``half-plane property'' studied in in the references above, from which we
take the following facts.  Binary matroids are strongly Rayleigh if and only if
they are regular.  $\mathrm{GF}(3)$- or $\mathrm{GF}(4)$-representable matroids are strongly Rayleigh
if and only if they are sixth-root of unity matroids.  Uniform matroids and the V\'amos matroid are strongly
Rayleigh, and a few more examples are known.   Minors, duality, free extensions, and two-sums
preserve the property.
Determining whether or not a given matroid is strongly Rayleigh is often a
challenging problem.  Here we provide infinitely many new examples of strongly
Rayleigh matroids, although they all have a very simple structure. 

Given a matroid $\M$ on the ground set $E$, let $\y=\{y_h:\ h\in E\}$ be algebraically independent commuting
indeterminates, and for $S\subseteq E$ let $\y^S = \prod_{h\in S} y_h$.  The \emph{basis enumerator} of $\M$
is the polynomial $M(\y) = \sum_{B} \y^B$ with the sum over the set of all bases of $\M$.  For distinct elements
$e,f\in E$, one can write
$$
M = M(\y) = M^{ef} + y_e M_e^f + y_f M_f^e + y_e y_f M_{ef}
$$
uniquely, in which the polynomials $M^{ef}, M_e^f, M_f^e, M_{ef}$ do not involve the variables $y_e$ or $y_f$.
The \emph{Rayleigh difference of $e$ and $f$ in $\M$} is
$$
\Delta M\{e,f\} = M_e^f M_f^e - M_{ef} M^{ef}.
$$
The matroid $\M$ has the \emph{strong Rayleigh property} provided that for every pair of distinct elements
$e,f\in E$, then $\Delta M\{e,f\}(\a)\geq 0$ for all $\a\in\RR^{E\drop\{e,f\}}$.
This definition extends naturally to any multiaffine polynomial
$Z(\y)=\sum_{S\subseteq E} \varphi(S) \y^S$ with real coefficients.

Let $\pi$ be a partition of the set $E$, and let $\Sym_\pi$ be the \emph{Young
subgroup} of all permutations of $E$ which leave each block of $\pi$ invariant
as a set.  We consider the class of matroids which are invariant under some
Young subgroup of the ground set.  The restriction of such a matroid to any
orbit is thus a uniform matroid.  As seen in Proposition \ref{GWS}, the
Grace-Walsh-Szeg\H{o} theorem allows us to reduce the number of variables in the
basis enumerator from $|E|$ to $|\pi|$, substantially simplifying the problem.
When $\pi$ has only two blocks, we obtain the following.

\begin{THM} \label{TWOFLATS}
Let $\M$ be the matroid of rank $r$ with point set $E=S\cup T$ partitioned
into disjoint flats $S$ of rank $s$ and $T$ of rank $t$, and with $|S|=a$ and
$|T|=b$, with points in the most general position possible.
Then $\M$ is strongly Rayleigh if and only if the polynomial
$$
P_\M(x)=\sum_{i=r-t}^{s} \binom{a}{i} \binom{b}{r-i} x^{i-r+t}
$$
has only real (nonpositive) roots.
\end{THM}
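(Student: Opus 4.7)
The plan is to apply Proposition~\ref{GWS} to reduce the strong Rayleigh condition to the stability of a homogeneous bivariate polynomial, and then to translate bivariate stability into real-rootedness of $P_\M(x)$.

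First, I would identify the bases. Since $S$ is a flat of rank $s$, $T$ is a flat of rank $t$, and the points are in the most general position possible, an $r$-subset $B\subseteq E$ is a basis of $\M$ if and only if $|B\cap S|\leq s$ and $|B\cap T|\leq t$. Writing $i=|B\cap S|$, these become $r-t\leq i\leq s$, so
$$
M(\y) \;=\; \sum_{i=r-t}^{s} e_i(\{y_h:h\in S\})\, e_{r-i}(\{y_h:h\in T\}),
$$
where $e_k$ denotes the $k$-th elementary symmetric polynomial. This is invariant under the Young subgroup $\Sym_\pi$ for $\pi=\{S,T\}$ and is multiaffine in each block, so Proposition~\ref{GWS} reduces the strong Rayleigh problem for $M(\y)$ to the stability of the polynomial obtained by setting $y_h=x$ for $h\in S$ and $y_h=y$ for $h\in T$. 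Using $e_k(x,\dots,x)=\binom{n}{k}x^k$ in $n$ equal arguments, this becomes
$$
M(x,y) \;=\; \sum_{i=r-t}^{s} \binom{a}{i}\binom{b}{r-i}\, x^i y^{r-i},
$$
which is homogeneous of degree $r$ with nonnegative coefficients.

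To finish, I would invoke the classical fact that a homogeneous bivariate polynomial with nonnegative coefficients is stable if and only if its dehomogenization at $y=1$ has only real nonpositive roots. This follows by factoring over $\CC$ into linear forms $\alpha_j x + \beta_j y$ and observing that such a form is stable precisely when $\alpha_j$ and $\beta_j$ are real with the same sign (or one vanishes); the conjugate-pair quadratic factors produced by nonreal roots are never stable. Since $M(x,1) = x^{r-t} P_\M(x)$, the root $0$ of multiplicity $r-t$ is automatically nonpositive, so the condition reduces to $P_\M(x)$ having only real nonpositive roots.

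The main point requiring care is the bookkeeping: verifying that the "most general position" hypothesis yields exactly the claimed basis set (a straightforward consequence of the two rank constraints) and that Proposition~\ref{GWS} is invoked correctly for the two-block Young subgroup. The concluding bivariate stability characterization is classical, so no deeper obstacle arises once the reduction to $M(x,y)$ is in hand.
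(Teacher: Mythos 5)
Your proposal is correct and follows essentially the same route as the paper: identify the basis enumerator as a sum of products of elementary symmetric functions over $S$ and $T$, apply Proposition~\ref{GWS} to reduce to a bivariate homogeneous $r$-form, and translate its stability into real nonpositive roots of $P_\M(x)$. The only (immaterial) difference is the final step, where the paper uses the substitution $x=\alpha\beta^{-1}$ and the range of this ratio over the upper half-plane, while you argue via factorization into linear and conjugate-pair quadratic factors — both are standard justifications of the same classical fact.
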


In Theorem \ref{TWOFLATS}, if $s+t=r$ then $\M$ is a direct sum, and if
$s+t=r+1$ then $\M$ is a two-sum, of uniform matroids.
These cases were already known to be strongly Rayleigh.  When $s+t\geq r+2$ the
condition in Theorem \ref{TWOFLATS} is nontrivial, and for $s+t=r+2$ we get
the following explicit criterion from the discriminant of a quadratic
univariate polynomial.

\begin{CORO} \label{CORO2FLATS}
Adopt the notation of Theorem \ref{TWOFLATS}, and assume that $s+t=r+2$.
Then $\M$ is strongly Rayleigh if and only if
$$
\frac{(a-s+2)(b-t+2)}{(a-s+1)(b-t+1)} \geq \frac{4(s-1)(t-1)}{st}.
$$
\end{CORO}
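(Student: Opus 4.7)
The plan is to specialize Theorem \ref{TWOFLATS} to the case $s+t=r+2$ and read off the discriminant condition of the resulting quadratic.

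First I would observe that under the assumption $s+t=r+2$, the summation range in
$$
P_\M(x)=\sum_{i=r-t}^{s}\binom{a}{i}\binom{b}{r-i}x^{i-r+t}
$$
has exactly three terms, with $i$ running over $s-2,\,s-1,\,s$, so that $P_\M(x)$ is a univariate quadratic
$$
P_\M(x)=\binom{a}{s}\binom{b}{t-2}x^{2}+\binom{a}{s-1}\binom{b}{t-1}x+\binom{a}{s-2}\binom{b}{t}.
$$
Since $a\geq s$ and $b\geq t$ (otherwise the matroid has no bases of the prescribed type), all three coefficients are strictly positive, so any real roots are automatically negative by Descartes' rule of signs. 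By Theorem \ref{TWOFLATS} it therefore suffices to show that the real-rootedness condition is precisely the stated inequality.

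Next I would compute the discriminant condition, which reads
$$
\binom{a}{s-1}^{2}\binom{b}{t-1}^{2}\;\geq\;4\binom{a}{s}\binom{a}{s-2}\binom{b}{t}\binom{b}{t-2},
$$
and simplify the ratios of adjacent binomial coefficients using
$\binom{a}{s}/\binom{a}{s-1}=(a-s+1)/s$ and $\binom{a}{s-2}/\binom{a}{s-1}=(s-1)/(a-s+2)$,
yielding
$$
\frac{\binom{a}{s-1}^{2}}{\binom{a}{s}\binom{a}{s-2}}=\frac{s(a-s+2)}{(s-1)(a-s+1)},
$$
and analogously for the $b,t$ factors. Substituting into the discriminant inequality and rearranging gives precisely
$$
\frac{(a-s+2)(b-t+2)}{(a-s+1)(b-t+1)}\;\geq\;\frac{4(s-1)(t-1)}{st}.
$$

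The proof is essentially a routine computation, and there is no serious obstacle beyond bookkeeping: the only thing to be careful about is confirming that all three coefficients are positive so that nonnegativity of the discriminant is the complete criterion (rather than needing a separate check that the roots are nonpositive).
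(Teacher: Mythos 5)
Your proposal is correct and coincides with the paper's own (one-line) argument: specialize $P_\M(x)$ to the three-term quadratic arising when $s+t=r+2$ and apply the quadratic formula, i.e.\ nonnegativity of the discriminant, with the positivity of the coefficients guaranteeing the roots are nonpositive. The binomial-ratio simplifications you carry out are exactly the routine computation the paper leaves implicit.
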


Finally, we consider whether a Rayleigh difference in $\M$ is in fact a sum of
squares of polynomials.

\begin{THM}\label{MAIN}
Let $r\geq 3$, $\ell\geq 1$, and $a\geq r-2$ be integers.
Let $\M=\M(r,\ell,a)$ be the rank
$r$ simple matroid with $\ell+2$ points on a line $L\cup\{e,f\}$ and a set $A$ of
$a$ points in general position relative to this line.  Then $\M(r,\ell,a)$ is
strongly Rayleigh if and only if either
$(r-2)\ell\leq 2$ or
$$
a \leq \A(r,\ell) = r + \frac{2(r+\ell+1)}{(r-2)\ell-2}.
$$
Moreover, the Rayleigh difference $\Delta M\{e,f\}$ is a sum of squares
if and only if this condition holds.
\end{THM}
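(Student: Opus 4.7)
The plan is to compute $\Delta M\{e,f\}$ explicitly, reduce it by completing the square in the $L$-variables to a residual polynomial in the $A$-variables, and then analyze this residual via Newton's inequality. Write $e_k^X$ for the $k$-th elementary symmetric polynomial in $\{y_h:h\in X\}$ and $p_2^L = \sum_{h\in L} y_h^2$.

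\textbf{Computation and square-completion.} Splitting bases of $\M(r,\ell,a)$ by their intersection with $\{e,f\}$ and the line $L\cup\{e,f\}$, and using the rank-$2$ structure of this flat together with general position of $A$, one finds
\[
M_{ef} = e_{r-2}^A,\quad M_e^f = M_f^e = e_{r-1}^A + e_1^L\,e_{r-2}^A,\quad M^{ef} = e_r^A + e_1^L\,e_{r-1}^A + e_2^L\,e_{r-2}^A.
\]
Using the identity $(e_1^L)^2 - e_2^L = \tfrac12(e_1^L)^2 + \tfrac12 p_2^L$, the Rayleigh difference is a quadratic polynomial in $\y_L$ with positive semidefinite leading form. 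Completing the square in two stages yields
\[
2(\ell+1)^2\,\Delta M\{e,f\} = \sum_{h\in L}\bigl((\ell+1)e_{r-2}^A\,y_h + e_{r-1}^A\bigr)^{2} + \bigl((\ell+1)e_{r-2}^A\,e_1^L + \ell\,e_{r-1}^A\bigr)^{2} + (\ell+1)\,\nu(\y_A),
\]
where $\nu(\y_A) := (\ell+2)(e_{r-1}^A)^2 - 2(\ell+1)\,e_r^A\,e_{r-2}^A$. The first two summands are visibly sums of squares in $\y_L\cup\y_A$, so $\Delta M\{e,f\}\geq 0$ on $\RR^{E\drop\{e,f\}}$ iff $\nu\geq 0$ on $\RR^A$, and $\Delta M\{e,f\}$ is a sum of squares iff $\nu$ is.

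\textbf{Newton's inequality and the condition.} Because $\prod_{h\in A}(z-y_h)$ has only real roots for real $\y_A$, Newton's inequality gives
\[
(e_{r-1}^A)^2 \geq \frac{r(a-r+2)}{(r-1)(a-r+1)}\,e_r^A\,e_{r-2}^A,
\]
with equality at the diagonal $y_h\equiv y$. Hence $\nu\geq 0$ on $\RR^A$ iff $\frac{2(\ell+1)}{\ell+2}\leq\frac{r(a-r+2)}{(r-1)(a-r+1)}$, which rearranges to the theorem's condition $(r-2)\ell\leq 2$ or $a\leq\A(r,\ell)$. This characterizes nonnegativity of $\Delta M\{e,f\}$. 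To deduce strong Rayleigh of the full matroid, I would invoke Proposition \ref{GWS}: after the Young symmetry reduction the matroid becomes a four-variable polynomial for which the only Rayleigh difference not immediately controlled is the $\{e,f\}$-one, the other pairs reducing to Theorem \ref{TWOFLATS}-type configurations with strictly simpler discriminants that are automatically nonnegative whenever the condition holds.

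\textbf{The SOS decomposition is the main obstacle.} For the sum-of-squares equivalence, the hard direction is constructing an explicit SOS representation of $\nu$ when the condition holds (pointwise nonnegativity does not a priori suffice). The set of $\lambda\geq 0$ for which $(e_{r-1}^A)^2 - \lambda\,e_r^A\,e_{r-2}^A$ is a sum of squares in $\y_A$ is convex and contains $\lambda=0$, so it suffices to treat the extremal Newton value $\lambda_{\max} = \binom{a}{r-1}^2/(\binom{a}{r}\binom{a}{r-2})$, where the polynomial vanishes on the diagonal. I would attempt to write this extremal polynomial as a positive combination of squares of the form $(y_{S\cup\{u\}} - y_{S\cup\{v\}})^2 = y_S^2(y_u-y_v)^2$, indexed by chains $(S,T)$ with $|S|=r-2$, $|T|=r$, $\{u,v\}=T\drop S$; the small case $a=4$, $r-1=2$ confirms the pattern, where $4e_2^2 - 9e_1e_3$ admits an explicit decomposition as $\tfrac12(S_1+S_2)$ with $S_1$ a sum over pair-partitions of $(y_a-y_b)^2(y_c+y_d)^2+(y_c-y_d)^2(y_a+y_b)^2$ and $S_2=\sum_{i\notin\{j,k\}} y_i^2(y_j-y_k)^2$. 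Extending this combinatorial identity to all $a, r$ is the technical heart of the theorem; the converse direction is easy, since failure of the condition forces $\nu$ (and hence $\Delta M\{e,f\}$) to take a negative value and thus fail to be a sum of squares.
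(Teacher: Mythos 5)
Your square-completion identity is correct (I checked that $2(\ell+1)^2\Delta M\{e,f\}$ equals the two manifest square terms plus $(\ell+1)\nu$ with $\nu=(\ell+2)(e_{r-1}^A)^2-2(\ell+1)e_r^Ae_{r-2}^A$), and together with Newton's inequality it does give a clean, elementary proof that $\Delta M\{e,f\}$ is PSD exactly when the stated condition holds; in fact your threshold $\tfrac{2(\ell+1)}{\ell+2}\le\tfrac{r(a-r+2)}{(r-1)(a-r+1)}$ is literally the discriminant condition of Corollary \ref{CORO2FLATS} with $s=r$, $t=2$, $b=\ell+2$. But your passage from this to the strong Rayleigh property of the whole matroid is not a proof: nonnegativity of the single Rayleigh difference $\Delta M\{e,f\}$ does not imply stability, and the claim that "the other pairs reduce to Theorem \ref{TWOFLATS}-type configurations with strictly simpler discriminants" is asserted, not argued. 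The clean fix is simply to apply Theorem \ref{TWOFLATS}/Corollary \ref{CORO2FLATS} once to the full basis enumerator (which is how the paper gets the first claim), or to run the paper's induction on $\ell$ using Proposition \ref{BWW}(d), which requires checking that $M_e$ and $M^e$ are strongly Rayleigh.

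The genuine gap is the positive direction of the SOS claim, which is the heart of the theorem. You reduce "condition implies $\Delta M\{e,f\}$ is SOS" to exhibiting a polynomial SOS certificate for $\nu$, equivalently (by convexity of the SOS cone) for the extremal Newton form $(e_{r-1}^A)^2-\lambda_{\max}\,e_r^Ae_{r-2}^A$, and you leave that construction as "the technical heart," verified only in the single case $a=4$, $r=3$. So the statement is not proved. Worse, the reduction is only one-directional and may be lossy exactly where it matters: when $a=\A(r,\ell)$ your $\nu$ is precisely the extremal Newton form, and nothing you say guarantees that this form is a sum of squares of polynomials even though $\Delta M\{e,f\}$ is — your substitution killing the $L$-squares is rational, so SOS-ness of $\Delta M\{e,f\}$ only yields $\nu$ as a sum of squares of rational functions. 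Thus your strategy could be strictly harder than the original problem at the boundary cases. The paper sidesteps this by certifying $\Delta M\{e,f\}$ directly: it symmetrizes the Gram-matrix equations (Lemma \ref{SYM}), identifies the putative Gram matrix as a block matrix built from the Johnson-scheme matrix of Proposition \ref{JOHN} (using the explicit SOS identity of Proposition \ref{UNIFORM} for the uniform part), and decides positive semidefiniteness by a Schur-complement eigenvalue computation (Lemma \ref{PSD-INEQ}), which yields both directions of the SOS equivalence at once.
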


Table \ref{A-TABLE} indicates the upper bound of Theorem \ref{MAIN} for
small values of $r$ and $\ell$.
\begin{table}
$$
\begin{array}{|r|rrrrrrrrrrrr|}
\hline
r \diagdown \ell
   & 1      & 2      & 3  & 4  & 5 & 6 & 7 & 8 & 9 & 10 & 11 & 12 \\
\hline
3  & \infty & \infty & 17  & 11  & 9  & 8  & 7  & 7  & 6  & 6  & 6  & 6  \\
4  & \infty & 11     & 8   & 7   & 6  & 6  & 6  & 5  & 5  & 5  & 5  & 5  \\
5  & 19     & 9      & 7   & 7   & 6  & 6  & 6  & 6  & 6  & 6  & 6  & 6  \\
6  & 14     & 9      & 8   & 7   & 7  & 7  & 7  & 7  & 6  & 6  & 6  & 6  \\
7  & 13     & 9      & 8   & 8   & 8  & 8  & 7  & 7  & 7  & 7  & 7  & 7  \\
8  & 13     & 10     & 9   & 9   & 9  & 8  & 8  & 8  & 8  & 8  & 8  & 8  \\
9  & 13     & 11     & 10  & 10  & 9  & 9  & 9  & 9  & 9  & 9  & 9  & 9  \\
10 & 14     & 11     & 11  & 11  & 10 & 10 & 10 & 10 & 10 & 10 & 10 & 10 \\
11 & 14     & 12     & 12  & 11  & 11 & 11 & 11 & 11 & 11 & 11 & 11 & 11 \\
12 & 15     & 13     & 13  & 12  & 12 & 12 & 12 & 12 & 12 & 12 & 12 & 12 \\
\hline
\end{array}
$$
\caption{The upper bound $a \leq \A(r,\ell)$ from Theorem \ref{MAIN}.}  \label{A-TABLE}
\end{table}

We asssume familiarity with matroid theory \cite{Ox} and the rudiments of symmetric
functions \cite{St}.  In Section 2 we discuss some preliminary material,
prove Theorem \ref{TWOFLATS}, and apply our method in the case of uniform matroids for
later use.  In Section 3 we apply the method to prove Theorem \ref{MAIN}.
In Section 4 we discuss some further potential applications which
might be tractable.

We thank Petter Br\"and\'en, Chris Godsil, Mario Kummer, Levent Tun\c{c}el,
and Cynthia Vinzant for interesting and helpful conversations and correspondence.

\renewcommand{\c}{\mathbf{c}}

\section{Preliminaries.}

\subsection{The strong Rayleigh property}

A \emph{form} is a homogeneous polynomial; a $d$-form has degree $d$; a
polynomial is \emph{multiaffine} if each variable occurs to at most the
first power.  A polynomial $F(\y)\in\RR[\y]$ is \emph{positive
semidefinite} (PSD) when $F(\a)\geq 0$ for all $\a\in\RR^E$.
A matroid is strongly Rayleigh exactly when every Rayleigh difference is a PSD form.
A \emph{sum-of-squares} (SOS) polynomial $F(\y)$
is one for which there are polynomials $p_i(\y)\in\RR[\y]$ for $1\leq i\leq n$ such that

$F(\y)=\sum_{i=1}^n p_i(\y)^2$.  It is not hard to see that a SOS $2d$-form
is a sum of squares of $d$-forms.  Clearly SOS polynomials are PSD, but the converse is
false.  The relationship between these concepts is the source of Hilbert's 17th
problem, and is a subject of continuing interest \cite{Sch}.

In the following proposition, a polynomial $Z(y_1,...,y_m)$ is \emph{stable}
provided that for any $w_i\in\CC$ with $\mathrm{Im}(w_i)>0$ for all $1\leq i\leq
m$, then $Z(w_1,...,w_m)\neq 0$.  Note that a univariate real polynomial is
stable if and only if it has only real roots.
\begin{PROP} \label{BWW}
Let $Z(y_1,...,y_m)$ be a multiaffine polynomial with real coefficients, and let
$E=\{1,...,m\}$.  The following conditions are equivalent:\\
\textup{(a)}\ $Z$ is stable.\\
\textup{(b)}\ $Z$ has the strong Rayleigh property.\\
\textup{(c)}\ for every index $g\in E$, both $Z_g$ and $Z^g$
have the strong Rayleigh property, and either $m\leq 1$ or for some pair of indices $\{e,f\}
\subseteq E$, $\Delta Z\{e,f\}(\a) \geq 0$ whenever $\a\in\RR^{E\drop\{e,f\}}$.\\
\textup{(d)} Either $m\leq 1$ or there exists a pair of indices $\{e,f\}$ such
that $Z_e$, $Z^e$, $Z_f$, $Z^f$ are all strongly Rayleigh, and
$\Delta Z\{e,f\}(\a) \geq 0$ whenever $\a\in\RR^{E\drop\{e,f\}}$.
\end{PROP}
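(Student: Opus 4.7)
The equivalence (a) $\Leftrightarrow$ (b) is Br\"and\'en's theorem \cite{Br1}, which I would invoke directly. For (b) $\Rightarrow$ (c), the plan is to show that strong Rayleigh descends to both $Z_g$ and $Z^g$ for every $g$: writing $\Delta Z\{i,j\}$ as a degree-at-most-one polynomial in $y_g$ (using multiaffineness), the constant term of this polynomial is $\Delta Z^g\{i,j\}$, and a short manipulation identifies $\Delta Z_g\{i,j\}$ as the image of $\Delta Z\{i,j\}$ under a specialization; nonnegativity is therefore inherited under both operations. The Rayleigh condition on the distinguished pair $\{e,f\}$ is built into the definition of strong Rayleigh, and (c) $\Rightarrow$ (d) is immediate since (c) is formally stronger.

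The crux is (d) $\Rightarrow$ (a), which I would prove by induction on $m$; the base case $m \leq 1$ is trivial. For the inductive step, apply the inductive instance of (b) $\Rightarrow$ (a) to promote $Z_e, Z^e, Z_f, Z^f$ from strongly Rayleigh (in $m-1$ variables) to stable. It then remains to prove the following lemma: a multiaffine $Z$ with $Z_e, Z^e, Z_f, Z^f$ all stable and $\Delta Z\{e,f\}$ PSD on $\RR^{E \drop \{e,f\}}$ is itself stable. To verify the lemma, I would fix an arbitrary $\w$ with each coordinate in the open upper half-plane $\mathbb{H}$, specialize to obtain the bivariate polynomial
$$
\zeta(y_e, y_f) \;=\; A + B y_e + C y_f + D y_e y_f
$$
with $A, B, C, D \in \CC$ the evaluations of $Z^{ef}, Z_e^f, Z_f^e, Z_{ef}$ at $\w$, and then show that $\zeta$ is zero-free on $\mathbb{H}^2$. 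Stability of the four partials secures that the four univariate ``edge'' polynomials extracted from $\zeta$ on the boundary of $\mathbb{H}^2$ are each zero-free on $\mathbb{H}$. A Hurwitz-type continuity argument, letting $\w$ deform along a path toward $\RR^{m-2}$ and using that in the real limit bivariate stability reduces to $BC - AD \geq 0$ (which is precisely $\Delta Z\{e,f\}(\a) \geq 0$ at the limit $\a$), then transports nonvanishing back into the interior.

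The main obstacle I anticipate is exactly this final bivariate step: passing from stability of the four edge polynomials plus the real Rayleigh inequality to the full bivariate stability of $\zeta$ requires ruling out that a continuous family of bivariate polynomials could acquire interior zeros while keeping all boundary polynomials zero-free. The cleanest route is probably to use the $2 \times 2$ Hermitian symbol-matrix characterization of stable bivariate multiaffine polynomials (as in the Borcea-Br\"and\'en framework), whose determinant corresponds to the Rayleigh difference; verifying positive semidefiniteness of this matrix from the inductive hypotheses then directly implies stability of $\zeta$ and closes the induction.
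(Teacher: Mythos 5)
The easy parts of your plan are fine: (a) $\Leftrightarrow$ (b) is Br\"and\'en's theorem, (c) $\Rightarrow$ (d) is formal, and your descent argument for (b) $\Rightarrow$ (c) works, modulo one slip: $\Delta Z\{i,j\}$ is a \emph{quadratic}, not linear, polynomial in $y_g$ --- its constant term is $\Delta Z^g\{i,j\}$ and its $y_g^2$-coefficient is $\Delta Z_g\{i,j\}$, so the contraction case is handled by dividing by $y_g^2$ and letting $y_g\to\infty$ (this is exactly the paper's identity (\ref{BWW-PROOF})). The genuine gap is in your (d) $\Rightarrow$ (a). The induction there does no real work: after invoking Br\"and\'en to convert $Z_e,Z^e,Z_f,Z^f$ into stable polynomials, everything rests on the lemma that stability of these four polynomials together with $\Delta Z\{e,f\}(\a)\geq 0$ for all $\a\in\RR^{E\drop\{e,f\}}$ forces $Z$ itself to be stable. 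That lemma is precisely the assertion (essentially Theorem 3.1 of \cite{Wag}) whose published justification the paper calls sketchy, and your sketch does not supply it.

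Concretely, the bivariate step fails as described. Fixing $\w$ with all coordinates in the open upper half-plane, the coefficients $A,B,C,D$ of $\zeta$ are complex, so $BC-AD=\Delta Z\{e,f\}(\w)$ is a complex number and the criterion ``$BC-AD\geq 0$ iff stable'' is available only for real specializations. Your plan to recover nonvanishing of $\zeta$ at such a $\w$ by deforming $\w$ toward $\RR^{m-2}$ and applying Hurwitz runs the implication backwards: Hurwitz's theorem passes nonvanishing from a convergent family to its limit, never from the (real, boundary) limit back to nearby complex parameter values; the set of stable polynomials is closed, not open, so stability of the real-coefficient limit gives no control at genuinely complex $\w$. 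A connectedness argument would need either openness of the set of good $\w$ or a known-stable starting point with all coordinates strictly in the upper half-plane, and you have neither. The closing appeal to a ``$2\times 2$ Hermitian symbol-matrix'' whose positive semidefiniteness ``follows from the inductive hypotheses'' is exactly the missing content: deriving a complex nonvanishing statement from the purely real inequality $\Delta Z\{e,f\}\geq 0$ is the whole difficulty. The paper avoids this entirely: it cites \cite{Br1} for (a) $\Leftrightarrow$ (b) and \cite{WW} for (b) $\Leftrightarrow$ (c), and proves only the bridge (d) $\Rightarrow$ (c), by induction on $m$ using the quadratic expansion of $\Delta Z\{e,f\}$ in a third variable $y_g$. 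You should either follow that route or genuinely prove the bivariate lemma (e.g., reproduce the Wagner--Wei argument), rather than gesture at it.
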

\begin{proof}
The equivalence of (a) and (b) is Theorem 5.6 of Br\"and\'en \cite{Br1}.  That (b) is
equivalent to (c) is proved in Theorem 3 of \cite{WW}.  Condition (c) clearly implies (d).
That (d) implies (b) is part of Theorem 3.1 of \cite{Wag}, but the argument there
is sketchy.  Here we show that (d) implies (c), bridging this gap.
First, it follows from (\ref{BWW-PROOF}) below that if $Z$ is a strongly Rayleigh
multiaffine polynomial, then every deletion $Z^g$ and every contraction $Z_g$ is
also strongly Rayleigh.

We prove that (d) implies (c) by induction on $m$, with the basis of induction $m\leq 2$ being
trivial.  For the induction step assume that (d) holds, let $m\geq 3$, let $\{e,f\}$ be a pair of
indices as in (d), and let $g\not\in\{e,f\}$ be any third index.  To prove
(c) we need only show that $Z_g$ and $Z^g$ are strongly Rayleigh.

Expanding
$\Delta Z\{e,f\}$ as a quadratic in $y_g$, we have
\begin{equation} \label{BWW-PROOF}
\Delta Z\{e,f\} = y_g^2 \Delta Z_g\{e,f\} + y_g Q + \Delta Z^g\{e,f\} 
\end{equation}
for some polynomial $Q$ not involving $y_e$, $y_f$, or $y_g$.
Setting $y_g=0$ in (\ref{BWW-PROOF}), condition (d) implies that $\Delta
Z^g\{e,f\}(\a)\geq 0$ whenever $\a\in\RR^{E\drop\{e,f,g\}}$.
Condition (d) also implies that all of $Z_e^g$, $Z^{eg}$, $Z_f^g$, $Z^{fg}$ are strongly Rayleigh.
So $Z^g$ satisfies (d), and so by induction on $m$,  condition (c) holds for $Z^g$,
so that $Z^g$ is strongly Rayleigh since (c) implies (b).
By considering the limit of $y_g^{-2}\Delta Z\{e,f\}$ as $y_g\goesto\infty$, a
similar argument shows that $Z_g$ is strongly Rayleigh.  Thus, condition (c)
holds.

This completes the induction step, and the proof.
\end{proof}

If every Rayleigh difference of $\M$ is a square of a polynomial, then $\M$ is
certainly strongly Rayleigh.  (Regular matroids have this property.)
By Theorem 5.5 of \cite{KPV}, the basis enumerator $M(\y)$ of such a matroid has
a ``definite determinantal representation''.  The V\'amos matroid $\V_8$ is known
to be strongly Rayleigh \cite{WW}, but its basis enumerator does not have a definite
determinantal representation \cite{Br2}.  In fact, for some pair of elements of
$\V_8$ the Rayleigh difference is a PSD form but not a SOS form \cite{K1,KPV}.

\subsection{Highly symmetric matroids}

The following follows from the Grace-Walsh-Szeg\H{o} theorem; see \cite{BW,Wag}.
\begin{PROP} \label{GWS}
Let $Z(\y)\in\RR[\y]$ be a polynomial and $\pi$ a partition of the set $E$.
Assume that $Z$ is invariant under every permutation in $\Sym_\pi$.
Let $\x=\{x_B: B\in\pi\}$ be indeterminates indexed by the blocks of $\pi$, and
define $\beta_\pi:\RR[\y]\goesto\RR[\x]$ by
setting $\beta_\pi(y_h)=x_B$ for all $B\in\pi$ and $h\in B$, and algebraic
extension.  Then $Z(\y)$ is stable if and only if $\beta_\pi Z(\x)$ is stable.
\end{PROP}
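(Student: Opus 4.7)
The plan is to prove the two implications separately. One direction is essentially immediate and does not need Grace-Walsh-Szeg\H{o} at all: suppose $Z$ is stable and $\x\in\CC^\pi$ has $\mathrm{Im}(x_B)>0$ for every block $B\in\pi$. Define $\y\in\CC^E$ by $y_h:=x_B$ whenever $h\in B$. Then every $y_h$ has positive imaginary part, so $Z(\y)\neq 0$ by stability, and by the very definition of $\beta_\pi$ we have $(\beta_\pi Z)(\x)=Z(\y)\neq 0$. Hence $\beta_\pi Z$ is stable.

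For the converse, which is the substantive direction, I would apply the Grace-Walsh-Szeg\H{o} coincidence theorem one block at a time. Fix $\y\in\CC^E$ with $\mathrm{Im}(y_h)>0$ for every $h\in E$, and enumerate $\pi=\{B_1,\ldots,B_k\}$. At stage $j$, I view $Z$ as a polynomial in the variables $\{y_h:h\in B_j\}$ alone, treating the remaining $y_h$ as already-fixed complex numbers in the open upper half-plane. By the assumed $\Sym_\pi$-invariance, this polynomial is symmetric in the variables of $B_j$; in the intended applications (matroid basis enumerators) $Z$ is multiaffine, so it is multiaffine in those variables as well. Since the open upper half-plane is a circular region, Grace-Walsh-Szeg\H{o} supplies a value $\zeta_j$ with $\mathrm{Im}(\zeta_j)>0$ such that replacing each $y_h$ with $h\in B_j$ by the single value $\zeta_j$ leaves the current value of $Z$ unchanged.

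Iterating over $j=1,\ldots,k$ yields $Z(\y)=Z(\y')$, where $\y'$ is constant on each block $B_j$ with common value $\zeta_j$. This block-constant evaluation coincides with $(\beta_\pi Z)(\zeta)$, where $\zeta:=(\zeta_j)_{j=1}^k$ lies in $\CC^\pi$ with every coordinate having positive imaginary part. Stability of $\beta_\pi Z$ then gives $(\beta_\pi Z)(\zeta)\neq 0$, hence $Z(\y)\neq 0$, which is the stability of $Z$.

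The main point requiring care is verifying that the hypotheses of Grace-Walsh-Szeg\H{o} persist throughout the iteration. Symmetry in each not-yet-processed block is trivially preserved, because a substitution inside the processed block touches none of the variables in the other blocks. Multiaffineness in the unprocessed blocks is also preserved, since substituting complex numbers for some variables cannot raise the degree in any remaining variable. Once these book-keeping points are in hand, the proof reduces to an induction on $k$ with one invocation of Grace-Walsh-Szeg\H{o} per block, followed by a single evaluation of $\beta_\pi Z$ at the tuple $\zeta$ produced by the iteration.
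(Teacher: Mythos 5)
Your proof is correct and is essentially the argument the paper intends: the paper simply cites the Grace--Walsh--Szeg\H{o} theorem for this proposition, and your two directions (the trivial diagonal evaluation for ``$Z$ stable $\Rightarrow$ $\beta_\pi Z$ stable'', and the block-by-block GWS polarization for the converse, with the book-keeping that symmetry and degree bounds in unprocessed blocks persist) are exactly how that citation is meant to be unpacked. The one caveat, which you rightly flag, is that GWS requires the polynomial to be multiaffine in the block being processed; the proposition's literal wording allows arbitrary $Z$ (for which the equivalence can fail, e.g.\ $y_1^2+y_2^2$ versus $2x^2$), but the basis enumerators to which it is applied are multiaffine, so your argument covers the intended use.
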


For an integer partition $\lambda$ and subset $S \subseteq E$, let $e_\lambda(S)$
be the elementary symmetric function of shape $\lambda$ in the variables
$\{y_h:\ h\in S\}$, and similarly for monomial symmetric functions $m_\lambda(S)$.
Integer partitions with parts of size at most two will occur frequently; it
is convenient to use the notation $[n,i] = 2^{i} 1^{n-2i}$ for the partition of $n$
with $i$ parts of size $2$ and $n-2i$ parts of size $1$.

\begin{proof}[Proof of Theorem \ref{TWOFLATS}]
The basis enumerator of $\M$ is $M=\sum_{i,j} e_i(S) e_j(T)$, with the
sum over all pairs $(i,j)$ with $i+j=r$ and $0\leq i\leq s$ and $0\leq j\leq t$.
Since $s+t\geq r$ and $j=r-i$, the summation can be replaced with the sum over
$r-t\leq i\leq s$.  By Proposition \ref{GWS}, $M$ is stable if and only if the
bivariate $r$-form
$F(\alpha,\beta)=\sum_{i=r-t}^{s} \binom{a}{i} \binom{b}{r-i} \alpha^{i}\beta^{r-i}$
is stable.  Factor out $\beta^r$ and let $x=\alpha\beta^{-1}$ to see that
$F(\alpha,\beta)=\beta^r x^{r-t}P_\M(x)$.  As $\alpha$ and $\beta$ vary over all complex
numbers with positive imaginary part, $x$ varies over all complex numbers except
for nonpositive real numbers.  Thus, $F(\alpha,\beta)$ is stable if and only if
$P_\M(x)$ has only real nonpositive roots.  (Since the coefficients of $P_\M(x)$ are
positive, it has no positive real roots.)
\end{proof}
Corollary \ref{CORO2FLATS} follows immediately by applying the quadratic formula
to $P_\M(x)$.

The first claim of Theorem \ref{MAIN} follows from Corollary \ref{CORO2FLATS}, 
since $\M(r,\ell,a)$ is the case of $\M$ in Theorem \ref{TWOFLATS} in which $S=A$ and
$s=r$, and $T=L\cup\{e,f\}$ and $t=2$ and $b=\ell+2$.  Some routine calculation shows
that the condition in Corollary \ref{CORO2FLATS} holds if and only if either
$(r-2)\ell\leq 2$ or $a\leq\A(r,\ell)$.

By Proposition \ref{BWW}, a matroid is strongly Rayleigh if and only if every Rayleigh
difference is a PSD form.  In the rest of the paper, we begin to address the
question of when these Rayleigh differences are SOS forms. 

\begin{PROP}\label{UNIFORM}
For uniform matroids, every Rayleigh difference is a SOS form.
\begin{proof}
Let $\M=\U_{r,m}$ be the uniform matroid of rank $r$ on a set $E$ of size $m$;
its basis enumerator is $M = e_r(E)$.
By 2-transitivity of $\Sym_E$, only one Rayleigh difference $\Delta M\{e,f\}$
needs to be checked.  Fix $e,f\in E$, let $H=E\drop\{e,f\}$, let $e_\lambda=e_\lambda(H)$,
and let $d=r-1$.  Since $M_e^f=M_f^e=e_{d}$ and $M_{ef}=e_{d-1}$ and
$M^{ef}=e_{d+1}$, it follows that $\Delta M\{e,f\} = e_d^2 - e_{d-1}e_{d+1}$.
For $0\leq r\leq 2$ this is easily seen to be a sum of squares, so assume that
$d\geq 2$.  We claim that 
\begin{equation} \label{UNIF}
e_d^2 - e_{d-1} e_{d+1} =
\frac{1}{d+1} \sum_{j=0}^d \binom{d}{j}^{-1}\psi_{d,j}(H)
\end{equation}
in which $\psi_{d,j}(H)$ is defined for $0 \leq j \leq d$ by
$$
\psi_{d,j}(H) = \sum_{J \subseteq H: |J| = j} (\y^J)^2 e_{d-j}(H \drop J)^2.
$$
This expresses $\Delta M\{e,f\}$ as a SOS form.  Note that each $2d$-form $\psi_{d,j}(H)$
is a symmetric function of $\{y_h:\ h\in H\}$.

The first step is to express both sides of (\ref{UNIF}) in terms of the monomial basis
$\{ m_{\lb 2d,d-k \rb}:\ 0\leq k\leq d\}$.  One sees that 
$e_d^2 = \sum_{k=0}^d \binom{2k}{k} m_{\lb 2d,d-k \rb}$ and that
$e_{d-1}e_{d+1} = \sum_{k=1}^d \binom{2k}{k-1} m_{\lb 2d,d-k \rb}$, and it
follows that $e_d^2-e_{d-1}e_{d+1} = \sum_{k=0}^d \frac{1}{k+1}\binom{2k}{k} m_{\lb 2d,d-k \rb}$.
On the RHS of (\ref{UNIF}), the coefficient of $m_{\lb 2d,d-k \rb}$ in $\psi_{d,j}(H)$
is $\binom{d-k}{j}\binom{2k}{k}$.  To see this, let $U,V\subseteq H$ be disjoint
sets with $|U|=d-k$ and $|V|=2k$; the coefficient in question is the coefficient of
$(\y^U)^2\y^V$ in $\psi_{d,j}(H)$.  This monomial is constructed in
$\psi_{d,j}(H)$ as the product $(\y^J)^2 \y^S \y^T$ by choosing a $j$-subset
$J\subseteq U$ and a $k$-subset $K\subseteq V$, and setting $S=(U\drop J)\cup K$ and
$T=(U\drop J)\cup(V\drop K)$.  This construction is bijective:\ given such a
pair $(S,T)$ we recover $J=U\drop(S\cap T)$ and $K=S\drop T$.  There are
$\binom{d-k}{j}$ choices for $J$ and
$\binom{2k}{k}$ choices for $K$, establishing the formula.

Thus, equation (\ref{UNIF}) is equivalent to the statement that for all $0\leq k\leq d$,
$$
\frac{1}{d+1}\sum_{j=0}^d \binom{d}{j}^{-1}\binom{d-k}{j}\binom{2k}{k}
= \frac{1}{k+1}\binom{2k}{k}.
$$
Multiply both sides by $(d+1)\binom{d}{d-k}\binom{2k}{k}^{-1}$ and use
$\frac{d+1}{k+1}\binom{d}{d-k}=\binom{d+1}{k+1}$ and
$\binom{d}{d-k}\binom{d-k}{j}=\binom{d}{j}\binom{d-j}{k}$ to see that this is
equivalent to $\sum_{j=0}^d \binom{d-j}{k} = \binom{d+1}{k+1}$.
This is equivalent to $\sum_{j=0}^{d-k} \binom{d-j}{k} = \binom{d+1}{k+1}$,
since if $d-k<j\leq d$ then $\binom{d-j}{k}=0$.
This well-known binomial identity enumerates lattice paths from $(0,0)$ to
$(k+1,d-k)$, partitioned according to which of the edges
$(k,d-k-j)\goesto(k+1,d-k-j)$ is crossed, for each $0\leq j\leq d-k$.
\end{proof}
\end{PROP}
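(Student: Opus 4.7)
The plan is to reduce the claim to a symmetric-function identity and then construct an explicit SOS decomposition by matching coefficients against a natural basis of symmetric squares.

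First, the 2-transitivity of $\Sym_E$ on the ground set of $\M = \U_{r,m}$ fixes the basis enumerator $M = e_r(E)$, so only one Rayleigh difference $\Delta M\{e,f\}$ needs to be checked. A direct computation gives $M_e^f = M_f^e = e_d(H)$, $M_{ef} = e_{d-1}(H)$, and $M^{ef} = e_{d+1}(H)$ with $H = E\drop\{e,f\}$ and $d=r-1$; hence the task reduces to showing that $e_d^2 - e_{d-1}e_{d+1}$ is a SOS in $\RR[y_h : h\in H]$. Expanding in the monomial basis, only the partitions $\lb 2d, d-k\rb$ can contribute, and a short double-count (pairs of $d$-subsets sharing $d-k$ elements) identifies the coefficient of $m_{\lb 2d,d-k\rb}$ as $\binom{2k}{k} - \binom{2k}{k-1} = \frac{1}{k+1}\binom{2k}{k}$, the Catalan number.

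The Catalan coefficients suggest searching for the decomposition among the natural $\Sym_H$-invariant symmetric squares $\psi_{d,j}(H) = \sum_{|J|=j}\bigl(\y^J e_{d-j}(H\drop J)\bigr)^2$ for $0\leq j\leq d$, since these already exhaust the obvious ways to build a symmetric SOS of $d$-forms with monomial support in the $\lb 2d,d-k\rb$ partitions. I would then compute the coefficient of $m_{\lb 2d,d-k\rb}$ in $\psi_{d,j}(H)$ by a bijection counting monomials $(\y^U)^2\y^V$ (parameterize each such monomial by a choice of $j$-subset $J\subseteq U$ together with a $k$-subset of $V$ identifying the contribution from one of the two elementary symmetric factors), obtaining $\binom{d-k}{j}\binom{2k}{k}$. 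The ansatz $e_d^2 - e_{d-1}e_{d+1} = \sum_j c_j\psi_{d,j}(H)$ then amounts to the triangular linear system $\sum_j c_j\binom{d-k}{j} = \frac{1}{k+1}$ for $0\leq k\leq d$.

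The main obstacle will be identifying the $c_j$ in closed form. Motivated by the reciprocal-binomial pattern typical of Lagrange-interpolation and beta-type identities, my guess is $c_j = \frac{1}{(d+1)\binom{d}{j}}$; substituting and simplifying via $\binom{d-k}{j}/\binom{d}{j} = \binom{d-j}{k}/\binom{d}{k}$ collapses the claim to the hockey-stick identity $\sum_{j=0}^{d-k}\binom{d-j}{k} = \binom{d+1}{k+1}$, which has a direct lattice-path bijection. The low-rank cases $r\leq 2$ where $d\leq 1$ would be dispatched separately by inspection. Everything after the ansatz is routine, so the crux of the argument is pinpointing the right coefficients $c_j$.
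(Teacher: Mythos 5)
Your proposal is correct and follows essentially the same route as the paper: the same family $\psi_{d,j}(H)$ of symmetric squares, the same coefficient extraction $\binom{d-k}{j}\binom{2k}{k}$ and Catalan value $\frac{1}{k+1}\binom{2k}{k}$ on the monomial basis, the same weights $c_j = \frac{1}{(d+1)\binom{d}{j}}$, and the same reduction to the hockey-stick identity $\sum_{j=0}^{d-k}\binom{d-j}{k}=\binom{d+1}{k+1}$. The only cosmetic difference is that you present the weights as an ansatz solved from a triangular system rather than stating them up front.
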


\subsection{Sums of squares}
Consider an arbitrary matroid $\M$ of rank $r=d+1$ on a set $E$ of size $m$, let
$\{e,f\}\subseteq E$, and let $H=E\drop\{e,f\}$ and $F(\y)=\Delta M\{e,f\}(\y)$.
This $F(\y)$ is a $2d$-form and each variable $\{y_h:\ h\in H\}$ occurs at most to
the second power.  Thus, one can write $F(\y)=\sum_\alpha F_\alpha\, \y^\alpha$
for some integers $F_\alpha\in\ZZ$ indexed by the functions
$\alpha:H\goesto\{0,1,2\}$ for which $|\alpha|=\sum_{h\in H} \alpha(h)=2d$,
and in which $\y^\alpha = \prod_{h\in H} y_h^{\alpha(h)}$.

Now assume that $F(\y) = \sum_{i=1}^n p_i(\y)^2$ is a SOS form.
It is not hard to see that each of the polynomials $p_i(\y)$ must be a multiaffine
$d$-form.  Thus, each $p_i(\y)$ for $1\leq i\leq n$ can be written
$$
p_i(\y) = \sum_{S \subseteq H:\ |S| = d} c_{(S,i)}\, \y^S
$$
for some real coefficients $c_{(S,i)} \in \RR$.  For each
$\alpha:H\goesto\{0,1,2\}$, let $\P(\alpha)$ be the set of pairs $(S,T)$ such
that $S,T\subseteq H$, $|S|=|T|=d$, $S\cap T=\alpha^{-1}(2)$, and $S\symdiff T =
\alpha^{-1}(1)$.  (Here $\symdiff$ denotes symmetric difference of sets.)  In
other words, $(S,T)\in\P(\alpha)$ if and only if $|S|=|T|=d$ and $\y^S\y^T =
\y^\alpha$.  Note that if $\y^\alpha$ is a monomial of $m_{[2d,d-k]}$ then
$|\P(\alpha)|=\binom{2k}{k}$.
It follows that for all $1\leq i\leq n$ and $\alpha:H\goesto\{0,1,2\}$,
$$
[\y^\alpha] p_i(\y)^2 = \sum_{(S,T)\in\P(\alpha)} c_{(S,i)}\, c_{(T,i)}.
$$
Consequently, for all $\alpha:H\goesto\{0,1,2\}$,
$$
F_\alpha = [\y^\alpha] F(\y) = \sum_{i=1}^n \sum_{(S,T)\in\P(\alpha)}
c_{(S,i)}\, c_{(T,i)}.
$$
For each $S \subseteq H$ with $|S| = d$, define the vector $\c_S \in \RR^n$ by
$(\c_S)_i = c_{(S,i)}$ for each $1\leq i\leq n$, and equip $\RR^n$ with its
usual Euclidean (dot) inner product.  The previous equation becomes
\begin{equation} \label{DOT-SYSTEM}
\sum_{(S,T)\in\P(\alpha)} \ev{\c_S,\c_T} = F_\alpha.
\end{equation}
Thus, the existence of a SOS expression for $F(\y)$ is equivalent to the existence
of a set of vectors $\{\c_S\in\RR^n:\ S\subseteq H\ \mathrm{and}\ |S|=d\}$
such that the inner products $\ev{\c_S,\c_T}$ satisfy a certain system
$\L$ of linear equations (\ref{DOT-SYSTEM}) for each $\alpha$,
with the RHSs of these equations determined by the coefficients of $F(\y)$.

For a $2d$-form $F(\y)=\Delta M\{e,f\}(\y)$ as above, the system
$\L$ has an unwieldy number of variables:\ $\binom{t+1}{2}$, in which $t=\binom{m-2}{d}$.
However, when $F(\y)$ has a large group of symmetries, as it does in our
case, there is an enriched system which is consistent if and only
if $\L$ is consistent, and which has significantly fewer free parameters.

\begin{LMA}\label{SYM}
Let $\v_1,...,\v_t$ be finitely many vectors in a Euclidean space $V$, and let
$\Gamma\leq\Sym_t$ be a group of permutations of $\{1,...,t\}$.  Let $\L$ be a
system of linear equations satisfied by the inner products $\ev{\v_i,\v_j}$ for
$1\leq i,j\leq t$ that is invariant under the action of $\Gamma$.  Then there
is a set of vectors $\w_1,...,\w_t$ in a Euclidean space $W$ such that\\
\textup{(i)}\  the inner products $\ev{\w_i,\w_j}$ satisfy $\L$, and\\ 
\textup{(ii)}\ for all $1\leq i,j\leq t$ and $\sigma\in\Gamma$,
$\ev{\w_{\sigma(i)},\w_{\sigma(j)}} = \ev{\w_i,\w_j}$.
\begin{proof}
For each $1\leq i,j\leq t$ let $\alpha_{ij}=\la \v_i,\v_j\ra$ and let
$A=(\alpha_{ij})$ be the Gram matrix of the vectors $\{\v_i\}$.
Let $\beta_{ij} = |\Gamma|^{-1}\sum_{\sigma\in\Gamma} \alpha_{\sigma(i),\sigma(j)}$,
and let $B=(\beta_{ij})$ be the corresponding matrix.
With $A_\sigma=(\alpha_{\sigma(i),\sigma(j)})$ for each $\sigma\in\Gamma$ we have
$B=|\Gamma|^{-1}\sum_{\sigma\in \Gamma} A_\sigma$, in which each matrix $A_\sigma$ is
positive semidefinite, and so $B$ is also positive semidefinite.
Therefore $B$ is the Gram matrix of some set of vectors $\{\w_i\}$ in some
Euclidean space $W$.

For any $\sigma\in\Gamma$, $\beta_{\sigma(i),\sigma(j)} = \beta_{ij}$,
and so the vectors $\{\w_i\}$ satisfy condition (ii).
Condition (i) follows from the $\Gamma$-invariance of $\L$:\
for any equation $\sum_{ij} c_{ij}\alpha_{ij} = \eta$ in $\L$ and
$\sigma\in\Gamma$, the equation $\sum_{ij} c_{ij}\alpha_{\sigma(i),\sigma(j)} = \eta$
is also in $\L$;\ it follows that
$$
\sum_{ij} c_{ij} \beta_{ij}
 =
\frac{1}{|\Gamma|}\sum_{\sigma\in \Gamma} \sum_{ij} c_{ij}\alpha_{\sigma(i),\sigma(j)}
 =
\frac{1}{|\Gamma|}\sum_{\sigma\in \Gamma} \eta = \eta,
$$
as required.
\end{proof}
\end{LMA}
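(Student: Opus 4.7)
The plan is to use a Gram-matrix averaging argument. The inner products $\ev{\v_i,\v_j}$ assemble into a positive semidefinite (PSD) matrix $A$, and the PSD cone is preserved under convex combinations. So symmetrizing $A$ over $\Gamma$ should produce a matrix that is still PSD, is $\Gamma$-invariant in the indicated sense, and continues to satisfy $\L$. Any PSD matrix is a Gram matrix of some finite set of vectors in a Euclidean space, so the symmetrized matrix can then be realized as $\ev{\w_i,\w_j}$ for some vectors $\w_1,\ldots,\w_t$ in some $W$.

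First I would set $\alpha_{ij}=\ev{\v_i,\v_j}$ and, for each $\sigma\in\Gamma$, form the matrix $A_\sigma=(\alpha_{\sigma(i),\sigma(j)})$. Each $A_\sigma$ is obtained from $A$ by a simultaneous row-and-column permutation, hence is PSD. I then define $B=(\beta_{ij})$ with $\beta_{ij}=|\Gamma|^{-1}\sum_{\sigma\in\Gamma}\alpha_{\sigma(i),\sigma(j)}$, which is PSD as an average of PSD matrices and so is a Gram matrix of some vectors $\w_1,\ldots,\w_t$ in some Euclidean space $W$ (e.g.\ by spectral or Cholesky decomposition). Condition (ii) is then immediate: for any $\tau\in\Gamma$, reindexing the average by $\sigma\mapsto\tau\sigma$ in $\Gamma$ shows $\beta_{\tau(i),\tau(j)}=\beta_{ij}$.

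For condition (i), the key point is that $\Gamma$-invariance of $\L$ should be interpreted as saying that whenever a linear equation $\sum_{i,j}c_{ij}X_{ij}=\eta$ lies in $\L$, so does $\sum_{i,j}c_{ij}X_{\sigma(i),\sigma(j)}=\eta$ for every $\sigma\in\Gamma$. Given this interpretation, each of the permuted equations is satisfied by the original inner products $\alpha_{ij}$, and averaging these instances over $\sigma\in\Gamma$ yields $\sum_{i,j}c_{ij}\beta_{ij}=\eta$, which is precisely the statement that $B$ satisfies the original equation of $\L$. The only delicate step is pinning down the $\Gamma$-invariance of $\L$ in this equation-permuting sense; once that is articulated, the argument is essentially mechanical and I do not anticipate a serious obstacle.
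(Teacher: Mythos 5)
Your proposal is correct and follows essentially the same route as the paper: average the Gram matrix $A$ over $\Gamma$, note that each simultaneously permuted copy $A_\sigma$ is PSD so the average $B$ is PSD and hence a Gram matrix of vectors $\w_i$, and verify (ii) by reindexing and (i) by averaging the permuted instances of each equation using the $\Gamma$-invariance of $\L$. The paper interprets the invariance of $\L$ exactly in the equation-permuting sense you articulate, so there is no gap.
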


\subsection{Uniform matroids and Johnson schemes}
We return to the case of uniform matroids, for later use.
Let $\M=\U_{r,m}$ be the uniform matroid of rank $r=d+1$ on a set $E$ of
size $m=v+2$.  Adopting the notation above,
$\Delta M\{e,f\}=e_d^2-e_{d-1}e_{d+1}$ is invariant under the symmetric group
$\Sym_H$.  The orbits of the induced action of $\Sym_H$ on pairs $(S,T)$ of
$d$-subsets of $H$ are indexed by the integers $0\leq k\leq s =\min\{d,v-d\}$,
with the orbit indexed by $k$ corresponding to those pairs $(S,T)$ with $|S\cap T|=d-k$.
For each $0\leq k\leq s$, let $A_k$ be the square matrix indexed by $d$-subsets
of $H$, and with $(S,T)$-entry
$$
(A_k)_{S,T} = \left\{
\begin{array}{ll}
1 & \mathrm{if}\ |S\cap T| = d-k,\\
0 & \mathrm{otherwise}.
\end{array}
\right.
$$
These are the adjacency matrices of the Johnson association scheme $J(v,d)$;\
see \cite{Go,Z}.
They are symmetric and pairwise commuting, and hence simultaneously diagonalizable;\
$A_0=I$ is the identity matrix and $\sum_{k=0}^s A_k = J$ is the all-ones
matrix.  For each $0\leq k\leq s$, the all-ones vector $\one$ is an
eigenvector of $A_k$ with eigenvalue $\binom{d}{d-k}\binom{v-d}{k}$.

By Proposition \ref{UNIFORM}, since $\M=\U_{r,m}$, $\Delta M\{e,f\}$ is a SOS form.
Let $\{\c_S\in\RR^n:\ S\subseteq H\ \mathrm{and}\ |S|=d\}$ be the 
corresponding set of vectors as in Section 2.3.
Consider any $\alpha:H\goesto\{0,1,2\}$ with
$|\alpha|=2d$, and let $|\alpha^{-1}(2)|=d-k$ for some $0\leq k\leq s$.
Since $e_d^2-e_{d-1}e_{d+1} = \sum_{k=0}^d \frac{1}{k+1}\binom{2k}{k} m_{[2d,d-k]}$,
it follows that
\begin{equation} \label{UNIF-JOHN}
\sum_{(S,T)\in\P(\alpha)} \ev{\c_S,\c_T} = [\y^\alpha]\Delta M\{e,f\} =\frac{1}{k+1}\binom{2k}{k}.
\end{equation}
This system of linear equations is invariant under the action of $\Sym_H$, and
so by Lemma \ref{SYM} there is a solution $\{\c_S\}$ such that $\ev{\c_S,\c_T}$
depends only on $|S\cap T|=d-k$.  There are $\binom{2k}{k}$ terms on the LHS
of (\ref{UNIF-JOHN}), all in the same orbit of $\Sym_H$, and it follows
that in this symmetrized solution $\ev{\c_S,\c_T} = 1/(k+1)$ whenever $|S\cap
T|=d-k$.  In other words, in terms of the matrices of the Johnson scheme,
the Gram matrix of $\{\c_S\}$ is
$$
G = A_0 + \frac{1}{2}A_1 + \cdots + \frac{1}{s+1} A_s.
$$

\begin{PROP} \label{JOHN}
Let $0\leq d\leq v$, let $s=\min\{d,v-d\}$, and let $\{A_0,A_1,...,A_s\}$ be the
adjacency matrices of the Johnson scheme $J(v,d)$.  Then the matrix
$G=\sum_{k=0}^s \frac{1}{k+1}A_k$ is positive semidefinite and $\one$ is an
eigenvector for $G$ with eigenvalue $\frac{1}{d+1}\binom{v+1}{d}$.
\end{PROP}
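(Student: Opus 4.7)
The plan is to read off positive semidefiniteness directly from the construction in the paragraph that precedes the statement, and then to evaluate the eigenvalue by a short binomial calculation.

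For the first claim, I would simply observe that $G$ \emph{is}, by construction, the Gram matrix of the symmetrized vectors $\{\c_S\}$ that were just produced: Proposition \ref{UNIFORM} guarantees that $\Delta M\{e,f\}$ for $\M=\U_{r,m}$ admits a sum-of-squares expression, and Lemma \ref{SYM}, applied to the associated system of linear equations in the inner products $\ev{\c_S,\c_T}$, allows us to average those vectors over the $\Sym_H$-action. The averaged vectors still solve the linear system, but their Gram matrix is now $\Sym_H$-invariant on $d$-subsets and must therefore be a nonnegative combination of the $A_k$; the values $1/(k+1)$ fall out of \eqref{UNIF-JOHN} exactly as in the paragraph above the proposition. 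Any Gram matrix over a real Euclidean space is positive semidefinite, so $G \succeq 0$ needs no further argument.

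For the eigenvalue, I would use the standard fact that each adjacency matrix of a distance-regular graph has constant row sum. For any fixed $d$-subset $S\subseteq H$, the number of $d$-subsets $T$ with $|S\cap T|=d-k$ is $\binom{d}{d-k}\binom{v-d}{k}=\binom{d}{k}\binom{v-d}{k}$, so $A_k\one=\binom{d}{k}\binom{v-d}{k}\one$. By linearity,
\begin{equation*}
G\one \;=\; \left(\sum_{k=0}^{s} \frac{1}{k+1}\binom{d}{k}\binom{v-d}{k}\right)\one.
\end{equation*}
The remaining task is to recognize this scalar as $\frac{1}{d+1}\binom{v+1}{d}$. I would rewrite $\frac{1}{k+1}\binom{d}{k}=\frac{1}{d+1}\binom{d+1}{k+1}$, extend the sum beyond $k=s$ at no cost (the extra terms vanish since either $\binom{d}{k}$ or $\binom{v-d}{k}$ is zero), reindex by $j=k+1$, and recognize the result as Vandermonde's convolution $\sum_{j}\binom{d+1}{j}\binom{v-d}{j-1}=\binom{v+1}{d}$.

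There is no serious obstacle; the substantive work is all upstream, in Proposition \ref{UNIFORM} and Lemma \ref{SYM}. The only points that merit any care are the identification of the symmetrized Gram matrix with the stated linear combination of the $A_k$ (which is forced because the $A_k$ span the $\Sym_H$-invariant matrices on $d$-subsets, and the coefficient on $A_k$ is pinned down by dividing the right-hand side of \eqref{UNIF-JOHN} by the orbit size $\binom{2k}{k}$), and the routine Vandermonde reduction.
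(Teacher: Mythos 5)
Your proposal is correct and follows essentially the same route as the paper: positive semidefiniteness is read off from the fact (established in the remarks preceding the proposition via Proposition \ref{UNIFORM} and Lemma \ref{SYM}) that $G$ is the Gram matrix of the symmetrized vectors, and the eigenvalue comes from the row sums $\binom{d}{d-k}\binom{v-d}{k}$ of the $A_k$ together with a binomial identity. The only cosmetic difference is that you verify the identity by an algebraic Vandermonde convolution, whereas the paper proves the same identity by a lattice-path count.
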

\begin{proof}
The preceding remarks of this section show that $G$ is positive semidefinite and
that $\one$ is an eigenvector for $G$ with eigenvalue
$\sum_{k=0}^s \frac{1}{k+1}\binom{d}{d-k}\binom{v-d}{k}$.
If $v-d=s<k\leq d$ then $\binom{v-d}{k}=0$, so this summation can be extended to
$0\leq k\leq d$ in either case.
To complete the proof it suffices to show that
$$
\sum_{k=0}^d \frac{1}{k+1} \binom{d}{d-k}\binom{v-d}{k} =
\frac{1}{d+1}\binom{v+1}{d}.
$$
Multiplying both sides by $d+1$ and using
$\frac{d+1}{k+1}\binom{d}{d-k}=\binom{d+1}{d-k}$, it suffices to show that
$\sum_{k=0}^d \binom{d+1}{d-k}\binom{v-d}{k} = \binom{v+1}{d}$.
This well-known binomial identity enumerates lattice paths from $(0,0)$ to
$(d,v+1-d)$, with the $k$-th summand enumerating those paths which pass through
the point $(d-k,k+1)$, for each $0\leq k\leq d$.  Each lattice path from $(0,0)$
to $(d,v+1-d)$ passes through exactly one of these points, proving the result.
\end{proof}

\section{Proof of Theorem 1.}

\subsection{Analyzing the SOS equations} \label{ANALYZE-SOS}

Let $\M=\M(r,\ell,a)$ be the matroid in Theorem~\ref{MAIN}, with $L$ and $A$ as
in the statement and with $d=r-1$ and $H=E\drop\{e,f\}= L\cup A$.  The basis enumerator of $\M$ is 
\begin{equation} \label{BASIS-ENUM}
M = e_{r}(A) + e_1(L\cup\{e,f\})e_{r-1}(A) + e_2(L\cup\{e,f\})e_{r-2}(A).
\end{equation}

\begin{LMA} \label{DELTA-M}
The Rayleigh difference of $\{e,f\}$ in $\M$ is
\begin{eqnarray*}
\Delta M\{e,f\}
&=& \sum_{k=0}^d \frac{1}{k+1}\binom{2k}{k} m_{\lb 2d, d-k \rb}(A) \\
& & +\, m_1(L)\, \sum_{k=1}^{d} \binom{2k-1}{k} m_{\lb 2d-1, d-k \rb}(A) \\
& & +\, (m_2(L) + m_{11}(L))\, \sum_{k=0}^{d-1} \binom{2k}{k} m_{\lb 2d-2, d-1-k \rb}(A).
\end{eqnarray*}
\end{LMA}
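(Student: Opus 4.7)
The plan is to carry out the standard two-step procedure: first extract $M^{ef}$, $M_e^f$, $M_f^e$, $M_{ef}$ from the basis enumerator (\ref{BASIS-ENUM}), and then expand the Rayleigh difference in the monomial symmetric function basis of $\{y_h:\ h\in A\}$.

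First I would expand $e_1(L\cup\{e,f\})=e_1(L)+y_e+y_f$ and $e_2(L\cup\{e,f\})=e_2(L)+(y_e+y_f)e_1(L)+y_ey_f$, substitute into (\ref{BASIS-ENUM}), and collect powers of $y_e$ and $y_f$. Writing $e_k=e_k(A)$ and $d=r-1$, this yields
\showon
M^{ef}  &= e_{d+1}+e_1(L)e_d+e_2(L)e_{d-1},\\
M_e^f &= M_f^e = e_d+e_1(L)e_{d-1},\\
M_{ef}  &= e_{d-1}.
\showoff
A direct computation of $M_e^fM_f^e-M_{ef}M^{ef}$ then gives
\showon
\Delta M\{e,f\}&=(e_d^2-e_{d-1}e_{d+1})+e_1(L)\, e_de_{d-1}+\bigl(e_1(L)^2-e_2(L)\bigr)\,e_{d-1}^2,
\showoff
where the cross term $2e_1(L)e_de_{d-1}-e_1(L)e_{d-1}e_d$ collapses to $e_1(L)e_de_{d-1}$. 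Using the standard identity $e_1(L)^2=2e_2(L)+m_2(L)$ and $e_2(L)=m_{11}(L)$, one has $e_1(L)^2-e_2(L)=m_2(L)+m_{11}(L)$, and of course $e_1(L)=m_1(L)$.

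It remains to expand each of $e_d^2-e_{d-1}e_{d+1}$, $e_de_{d-1}$, and $e_{d-1}^2$ in the monomial basis of $\{y_h:\ h\in A\}$. The first and third expansions were already established in the course of the proof of Proposition~\ref{UNIFORM}, giving coefficients $\frac{1}{k+1}\binom{2k}{k}$ and $\binom{2k}{k}$ respectively. For $e_de_{d-1}$, I would run the same bijective argument: a monomial of type $m_{[2d-1,d-k]}(A)$ is $\y^U{}^2\y^V$ with $|U|=d-k$ and $|V|=2k-1$, and to write it as $\y^S\y^T$ with $|S|=d$, $|T|=d-1$ one is forced to include $U$ in both $S$ and $T$ and then choose the $k$ elements of $V$ going into $S\drop U$, giving coefficient $\binom{2k-1}{k}$. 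Collecting all three expansions yields the claimed formula.

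The only real obstacle is keeping the bookkeeping straight between the three shapes $[2d,d-k]$, $[2d-1,d-k]$, $[2d-2,d-1-k]$ and their associated ranges for $k$; once the algebraic simplification leading to $m_1(L)$ and $m_2(L)+m_{11}(L)$ is in place, the monomial expansions are immediate from (or parallel to) those already used in the uniform case.
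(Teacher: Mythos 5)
Your proposal is correct and follows essentially the same route as the paper: extract $M_e^f=M_f^e=e_d+e_1(L)e_{d-1}$, $M_{ef}=e_{d-1}$, $M^{ef}=e_{d+1}+e_1(L)e_d+e_2(L)e_{d-1}$, reduce to $(e_d^2-e_{d-1}e_{d+1})+e_1(L)e_de_{d-1}+(e_1(L)^2-e_2(L))e_{d-1}^2$, and then expand in the monomial basis by the same bijective counting used for Proposition~\ref{UNIFORM}. Your explicit $\binom{2k-1}{k}$ count for $e_de_{d-1}$ is exactly the ``analogous argument'' the paper leaves to the reader, so nothing is missing.
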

\begin{proof}
From (\ref{BASIS-ENUM}), we see that 
\begin{eqnarray*}
M_e^f = M_f^e &=& e_{r-1}(A) + e_1(L)e_{r-2}(A), \\
M_{ef} &=& e_{r-2}(A), \\
\mathrm{and}\ \ M^{ef} &=& e_r(A) + e_1(L)e_{r-1}(A) + e_2(L)e_{r-2}(A).
\end{eqnarray*}
It follows that $M_e^f M_f^e -M_{ef}M^{ef}$ equals
$$
e_d(A)^2 - e_{d-1}(A)e_{d+1}(A)
+ e_1(L)e_d(A)e_{d-1}(A)
+ (e_1(L)^2 - e_2(L)) e_{d-1}(A)^2.
$$
\noindent
Arguments analogous to those in the proof of Proposition \ref{UNIFORM}
finish the calculation.
\end{proof}

Now $\Delta M \{e,f\} = \sum_{i=1}^n p_i(\y)^2$ is a SOS form if and only if
there is a corresponding set of vectors $\{\c_S\in\RR^n:\ S\subseteq H\
\mathrm{and}\ |S|=d\}$ as in Section 2.3.
Let $\L$ denote the system of linear equations (\ref{DOT-SYSTEM})
induced by comparison of coefficients, with RHSs given by $F_\alpha =
[\y^\alpha]\Delta M\{e,f\}$.

Consider any $\alpha:H\goesto\{0,1,2\}$ with
$|\alpha|=2d$, and let $U=\alpha^{-1}(2)$ and $V=\alpha^{-1}(1)$.  Let
$|U|=d-k$, so that $0\leq k\leq d$ and $|V|=2k$.
From Lemma \ref{DELTA-M}, the equations
$\sum_{(X,Y)\in\P(\alpha)}\ev{\c_X,\c_Y} = F_\alpha$ of $\L$
fall into several cases, as follows.
\begin{eqnarray}
\label{C1} & & \mbox{If $U\subseteq A$ and $V\subseteq A$, then $F_\alpha=\frac{1}{k+1}\binom{2k}{k}$.}\\
\label{C2} & & \mbox{If $U\subseteq A$ and $|V\cap L|=1$, then $F_\alpha = \binom{2k-1}{k}$.}\\
\label{C3} & & \mbox{If $U\subseteq A$ and $|V\cap L|=2$, then $F_\alpha = \binom{2k-2}{k-1}$.}\\
\label{C4} & & \mbox{If $|U\cap L|=1$ and $V\subseteq A$,  then $F_\alpha = \binom{2k}{k}$.}\\
\label{C5} & & \mbox{In all remaining cases, $F_\alpha=0$.}
\end{eqnarray}

One sees that $\Delta M\{e,f\}$ is invariant under the Young subgroup
$\Gamma=\Sym_L\times\Sym_A$ of $\Sym_H$.
If $(U,V)$ is a pair as in cases (\ref{C1}) to (\ref{C5}) and $\sigma\in\Gamma$,
then $(\sigma(U),\sigma(V))$ is another such pair, and is in the same one of these
cases as is $(U,V)$.  It follows that the system $\L$ of linear equations is invariant
under the action of $\Gamma$.  By Lemma \ref{SYM} we may enrich $\L$ by the requirement
that $\ev{\c_S,\c_T}$ depends only on the orbit of $(S,T)$ in the action of
$\Gamma$ on pairs of $d$-subsets of $H$ without introducing a new inconsistency.
Since $\ev{\c_S,\c_T}=\ev{\c_T,\c_S}$, this common value on the orbit of $(S,T)$
is the same as the common value on the orbit of $(T,S)$.
For a pair $(S,T)$ of $d$-subsets of $H$, let
$$
\omega(S,T)=(|S\cap L|,|T\cap L|,|S\cap T\cap L|, |S\cap T\cap A|).
$$
Two such pairs $(S,T)$ and $(X,Y)$ are in the same orbit of $\Gamma$ if and only
if $\omega(S,T)=\omega(X,Y)$.

\subsection{The putative Gram matrix} \label{GRAM-SECT}
We continue with the notation of Section \ref{ANALYZE-SOS}.  Also, the notation
$\S(H,d) = \{S\subseteq H:\ |S|=d\}$ will be convenient.

\begin{LMA} \label{SYM-SYSTEM}
Let $\{\c_S\in\RR^n:\ S\in\S(H,d)\}$ be a set of vectors solving the equations
$\L$ of (\ref{C1}) to (\ref{C5}).  Let $S,T\in\S(H,d)$ and let $p\in L$.\\
\textup{(a)}  Then $\ev{\c_S,\c_S}=1$ if $|S\cap L|\leq 1$, and
$\c_S=\zero$ if $|S\cap L|\geq 2$.\\
\textup{(b)}  If $S\cap L = T\cap L = \{p\}$, then $\c_S=\c_T$.
\end{LMA}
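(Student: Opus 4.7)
The plan is to case-analyze the SOS equations (\ref{C1})--(\ref{C5}) directly, invoking Lemma \ref{SYM} to symmetrize the $\c_S$ for part (b).

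For part (a), I would fix $S\in\S(H,d)$ and take the function $\alpha:H\goesto\{0,1,2\}$ with $\alpha^{-1}(2)=S$ and $\alpha^{-1}(1)=\none$. Then $\P(\alpha)=\{(S,S)\}$, so (\ref{DOT-SYSTEM}) collapses to $\ev{\c_S,\c_S}=F_\alpha$. With $U=S$ and $V=\none$, the indexing integer $k$ is $0$. If $|S\cap L|=0$ this is case (\ref{C1}), giving $F_\alpha=1$; if $|S\cap L|=1$ it is case (\ref{C4}), again giving $F_\alpha=1$; and if $|S\cap L|\geq 2$ it falls into the residual case (\ref{C5}) with $F_\alpha=0$, forcing $\c_S=\zero$.

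For part (b), I would first apply Lemma \ref{SYM} so that $\ev{\c_X,\c_Y}$ depends only on $\omega(X,Y)$. Writing $S=\{p\}\cup S_0$ and $T=\{p\}\cup T_0$ with $S_0,T_0\subseteq A$ of size $d-1$, set $k=d-1-|S_0\cap T_0|$; the case $k=0$ forces $S=T$ and is trivial, so assume $k\geq 1$. Take $\alpha$ with $\alpha^{-1}(2)=S\cap T$ and $\alpha^{-1}(1)=S\symdiff T$. Since $S\cap L=T\cap L=\{p\}$, we have $|U\cap L|=1$ and $V\subseteq A$ of size $2k$, placing $\alpha$ in case (\ref{C4}) with $F_\alpha=\binom{2k}{k}$. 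Every pair $(X,Y)\in\P(\alpha)$ satisfies $X\cap Y=S\cap T$, hence $X\cap L=Y\cap L=\{p\}$, so $\omega(X,Y)=\omega(S,T)$; by $\Gamma$-invariance each of the $|\P(\alpha)|=\binom{2k}{k}$ terms on the LHS of (\ref{DOT-SYSTEM}) equals $\ev{\c_S,\c_T}$. Cancellation yields $\ev{\c_S,\c_T}=1$, and combining with part (a),
$$
\|\c_S-\c_T\|^2=\ev{\c_S,\c_S}+\ev{\c_T,\c_T}-2\ev{\c_S,\c_T}=1+1-2=0,
$$
so $\c_S=\c_T$.

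The main work is bookkeeping: matching each $\alpha$ to its case in (\ref{C1})--(\ref{C5}) and counting $|\P(\alpha)|=\binom{2k}{k}$. No genuine obstacle is anticipated, since symmetrization reduces the sum of inner products to a single scalar equation from which the desired identity $\ev{\c_S,\c_T}=1$ is immediate.
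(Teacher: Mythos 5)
Your proposal is correct and is essentially the paper's own argument: part (a) is the identical case check ($U=S$, $V=\none$, $k=0$, falling into cases (\ref{C1}), (\ref{C4}), (\ref{C5})), and part (b) reads off $\ev{\c_S,\c_T}=1$ from the case-(\ref{C4}) equation because all $\binom{2k}{k}$ pairs in $\P(\alpha)$ lie in the single $\Gamma$-orbit of $(S,T)$, then finishes using the unit norms from (a). Your explicit invocation of Lemma \ref{SYM} plays exactly the role of the paper's enrichment of $\L$ to orbit-constant inner products made just before the lemma; if you wanted (b) for a completely arbitrary solution of $\L$, you could instead observe that by part (a) and Cauchy--Schwarz each of the $\binom{2k}{k}$ terms in the sum is at most $1$, so their summing to $\binom{2k}{k}$ forces every term, in particular $\ev{\c_S,\c_T}$, to equal $1$.
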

\begin{proof}
For (a),
the inner product $\ev{\c_S,\c_S}$ corresponds to
$\y^\alpha = (\y^S)^2$ and $\P(\alpha)=\{(S,S)\}$.
This corresponds to $U=S$ and $V=\none$ and $k=0$.  If $S\cap L=\none$ then case
(\ref{C1}) applies and $\ev{\c_S,\c_S}=\frac{1}{0+1}\binom{0}{0}=1$.
If $|S\cap L|=1$ then case (\ref{C4}) applies and
$\ev{\c_S,\c_S}=\binom{0}{0}=1$.  If $|S\cap L|\geq 2$ then case (\ref{C5}) applies
and $\ev{\c_S,\c_S}=0$, so that $\c_S=\zero$.

For (b), let $\y^\alpha=\y^S\y^T$, and define $U$, $V$, and $k$ accordingly from
$\alpha$; this is in case (\ref{C4}) above.
In the equation $\sum_{(X,Y)\in\P(\alpha)} \ev{\c_X,\c_Y}
= \binom{2k}{k}$ the LHS has $\binom{2k}{k}$ terms, all in the same orbit of
$\Gamma$ as $(S,T)$.  It follows that $\ev{\c_S,\c_T}=1$.  Since
$\ev{\c_S,\c_S}=\ev{\c_T,\c_T}=1$, it follows that $\c_S=\c_T$.
\end{proof}
For each $p\in L$, denote by $\c_p\in\RR^n$ the vector such that $\c_S=\c_p$ for
all $S\in\S(H,d)$ for which $S\cap L = \{p\}$.

\begin{PROP} \label{GRAM}
With the notation above, $\Delta M\{e,f\}$ is a SOS form if and only if
there are unit vectors $\{\c_S\in\RR^n:\ S\in\S(A,d)\}$
and $\{\c_p\in\RR^n:\ p\in L\}$ such that the following hold.\\
\textup{(a)} For $S,T\in\S(A,d)$ with $|S\cap T|=d-k$, $\ev{\c_S,\c_T} = 1/(k+1)$.\\
\textup{(b)} For $S\in\S(A,d)$ and $p\in L$, $\ev{\c_S,\c_p}=1/2$.\\
\textup{(c)} For $p,q\in L$ with $p\neq q$, $\ev{\c_p,\c_q} = 1/2$.
\end{PROP}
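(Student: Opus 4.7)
The plan is to derive both directions of the biconditional from a case-by-case reduction of the $\Gamma$-symmetrized system $\L$ (given by equations (\ref{C1})--(\ref{C5})) to the three conditions (a), (b), (c). By Lemma \ref{SYM-SYSTEM}, any such symmetrized solution must have $\c_S$ a unit vector for $|S\cap L|\leq 1$, zero for $|S\cap L|\geq 2$, and depending only on $S\cap L=\{p\}$ in the borderline case, so the free data is exactly the set of unit vectors $\{\c_S:S\in\S(A,d)\}$ and $\{\c_p:p\in L\}$ named in the statement. It therefore suffices to show that, after substituting these constraints, each equation in $\L$ either collapses to one of (a)--(c) or holds automatically.

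I would treat the five cases in turn. In case (\ref{C1}), both members of each pair in $\P(\alpha)$ lie in $\S(A,d)$, all $\binom{2k}{k}$ of them sit in one $\Gamma$-orbit, and matching the sum to $\frac{1}{k+1}\binom{2k}{k}$ produces (a). In case (\ref{C2}) the unique point of $V\cap L$ must lie in exactly one of $S,T$, giving $2\binom{2k-1}{k-1}$ equal contributions $\ev{\c_p,\c_T}$; since $\binom{2k-1}{k-1}=\binom{2k-1}{k}$, the equation reduces to $\ev{\c_p,\c_T}=\frac{1}{2}$, which is (b). In case (\ref{C3}), with $V\cap L=\{p,q\}$, the pairs placing both points on the same side vanish because they force $|S\cap L|\geq 2$ or $|T\cap L|\geq 2$; only the $2\binom{2k-2}{k-1}$ split configurations survive and each contributes $\ev{\c_p,\c_q}$, yielding (c). In case (\ref{C4}) the single element of $U\cap L$ is forced into both $S$ and $T$, so $\c_S=\c_T=\c_p$ and the equation $\binom{2k}{k}\cdot 1=\binom{2k}{k}$ holds automatically. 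In case (\ref{C5}) a pigeonhole argument on the shape of $(U,V)$ (splitting by $|U\cap L|\geq 2$, or $|U\cap L|=1$ with $|V\cap L|\geq 1$, or $|U\cap L|=0$ with $|V\cap L|\geq 3$) shows every pair $(S,T)\in\P(\alpha)$ has $|S\cap L|\geq 2$ or $|T\cap L|\geq 2$, so the sum vanishes and matches $F_\alpha=0$.

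The main obstacle will be the bookkeeping in cases (\ref{C3}) and (\ref{C5}): one must verify that the ``same side'' pairs in (\ref{C3}) and every configuration in (\ref{C5}) are indeed killed by Lemma \ref{SYM-SYSTEM}(a), and that the counts $\binom{2k-1}{k-1}$ and $\binom{2k-2}{k-1}$ align correctly with the right-hand sides coming from Lemma \ref{DELTA-M}. Once this is done, the forward direction is immediate: any SOS certificate, after $\Gamma$-symmetrization via Lemma \ref{SYM}, yields vectors satisfying (a)--(c). For the converse, given vectors as in the statement, I extend them by setting $\c_S=\c_p$ when $S\cap L=\{p\}$ and $\c_S=\zero$ when $|S\cap L|\geq 2$; the case analysis then shows every equation of $\L$ is met, so the system (\ref{DOT-SYSTEM}) has a solution and $\Delta M\{e,f\}$ is a sum of squares.
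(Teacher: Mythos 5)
Your proposal is correct and follows essentially the same route as the paper: invoke Lemma \ref{SYM} to symmetrize, use Lemma \ref{SYM-SYSTEM} to pin down the unit/zero vectors and the identification $\c_S=\c_p$, then reduce the equations of $\L$ case by case ((\ref{C1})--(\ref{C5})) to conditions (a)--(c), with the converse obtained by the same extension $\c_S=\c_p$ or $\c_S=\zero$. Your counts (e.g.\ $2\binom{2k-1}{k-1}$ versus the paper's $\binom{2k}{k}$ in case (\ref{C2})) and the pigeonhole check in case (\ref{C5}) agree with, and slightly elaborate on, the paper's argument.
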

\begin{proof}
We have seen that $\Delta M\{e,f\}$ is a SOS form if and only if $\L$ has a
solution that is constant on orbits of $\Gamma$ acting on pairs of $d$-subsets
of $H$.  By Lemma \ref{SYM-SYSTEM} such a solution must consist of unit vectors
$\{\c_S\}$ and $\{\c_p\}$ indexed as in the statement.  The remaining equations
from $\L$ are equivalent to (a), (b), and (c), as follows.

For (a), let $\y^\alpha=\y^S\y^T$, and define $U$, $V$, and $k$ accordingly from
$\alpha$; this is in case (\ref{C1}) above.  In the equation $\sum_{(X,Y)\in\P(\alpha)} \ev{\c_X,\c_Y}
= \frac{1}{k+1}\binom{2k}{k}$ the LHS has $\binom{2k}{k}$ terms, all in the same orbit of
$\Gamma$ as $(S,T)$.  It follows that $\ev{\c_S,\c_T}=1/(k+1)$.

For (b), let  $T\subseteq H$ be such that $T\cap L=\{p\}$.
Let $\y^\alpha=\y^S\y^T$, and define $U$, $V$, and $k$ accordingly from
$\alpha$;  this is in case (\ref{C2}) above.  In the equation
$\sum_{(X,Y)\in\P(\alpha)} \ev{\c_X,\c_Y} = \binom{2k-1}{k}$ the LHS has $\binom{2k}{k}$ terms,
all in the same orbit of $\Gamma$ as either $(S,T)$ or $(T,S)$.  It follows that
$\ev{\c_S,\c_T}=\binom{2k-1}{k}/\binom{2k}{k}=1/2$.  This is independent of the
choice of $T$, so $\ev{\c_S,\c_p}=1/2$ is self-consistent.

For (c), let $S,T\subseteq H$ be such that $S\cap L=\{p\}$ and $T\cap L=\{q\}$.
Let $\y^\alpha=\y^S\y^T$, and define $U$, $V$, and $k$ accordingly from
$\alpha$; this is in case (\ref{C3}) above.
In the equation $\sum_{(X,Y)\in\P(\alpha)} \ev{\c_X,\c_Y} = \binom{2k-2}{k-1}$
the LHS has $\binom{2k}{k}$ terms, but if $\{p,q\}\subseteq X$ or
$\{p,q\}\subseteq Y$ then $\c_X=\zero$ or $\c_Y=\zero$, so that
$\ev{\c_X,\c_Y}=0$.  There are $2\binom{2k-2}{k-1}$ other terms, obtained by
choosing a $(k-1)$-subset $X'\subseteq V\cap A$, letting $Y'= (V\cap A)\drop
X'$, and considering the pairs $(X'\cup\{p\},Y'\cup\{q\})$ and $(X'\cup\{q\},Y'\cup\{p\})$.
Each of these pairs is in the same orbit of $\Gamma$ as $(S,T)$, and
it follows that $\ev{\c_S,\c_T}=\binom{2k-2}{k-1}/2\binom{2k-2}{k-1}=1/2$.
This is independent of the choice of $S$ and $T$, so $\ev{\c_p,\c_q}=1/2$ is
self-consistent.

This shows that (a), (b), and (c) are necessary.
Conversely, assume that $\{\c_S:\ S\in\S(A,d)\}$ and $\{\c_p:\ p\in L\}$ are
unit vectors as in the statement of the proposition, and for
$S\in\S(H,d)\drop\S(A,d)$ let $\c_S=\c_{p}$ if $S\cap L=\{p\}$ and let
$\c_S=\zero$ if $|S\cap L|\geq 2$.  This set $\{\c_S:\ S\in\S(H,d)\}$ is a
solution to $\L$, as is easily checked.  As in the previous three paragraphs,
cases (\ref{C1}), (\ref{C2}), and (\ref{C3}) follow from (a), (b), and (c).
Case (\ref{C4}) follows from $\c_S=\c_T$ whenever $S\cap L=T\cap L=\{p\}$ as in
the proof of Lemma \ref{SYM-SYSTEM}, and case (\ref{C5}) follows from
$\c_S=\zero$ when $|S\cap L|\geq 2$.

This completes the proof.
\end{proof}

Imagine the vectors $\{\c_S\in\RR^n:\ S\in\S(A,d)\}$ in any order, followed by
the vectors $\{\c_p:\ p\in L\}$ in any order, and form the putative Gram matrix
$\G$ of their inner products.  By Proposition \ref{GRAM}, this matrix has the block form
\begin{equation} \label{GRAM-MAT}
\G = \frac{1}{2}
\left[ \begin{array}{cc}
2G_t & J_{t\times\ell} \\
J_{\ell\times t} & I_\ell + J_\ell
\end{array} \right]
\end{equation}
in which $t=\binom{a}{d}$.  The upper-left block is $2$ times the $t$-by-$t$ square
matrix $G_t$ of Proposition \ref{JOHN} (with $v=a$) for the Johnson scheme $J(a,d)$.
The lower-right block is $\ell$-by-$\ell$ square, $I_\ell$ is the identity
matrix, and the various $J$ matrices are all-ones matrices of the appropriate
shapes.  We have reduced the problem to the following.

\begin{CORO} \label{GRAM-CORO}
With the notation above, $\Delta M\{e,f\}$ is a SOS form if and only if the
matrix $\G$ of (\ref{GRAM-MAT}) is positive semidefinite.
\end{CORO}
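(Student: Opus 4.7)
The plan is to combine Proposition \ref{GRAM} with the classical characterization of positive semidefinite matrices as precisely the Gram matrices of finite vector configurations in a Euclidean space. By Proposition \ref{GRAM}, the Rayleigh difference $\Delta M\{e,f\}$ is SOS if and only if there is a collection of unit vectors $\{\c_S:\ S\in\S(A,d)\}\cup\{\c_p:\ p\in L\}$ in some $\RR^n$ whose pairwise inner products are exactly those prescribed by conditions (a), (b), and (c). Such a vector configuration exists if and only if the prescribed table of inner products is a positive semidefinite symmetric matrix, so everything hinges on identifying that table with $\G$.

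The verification step is to read off the entries of $\G$ from (\ref{GRAM-MAT}) and match each of them with the corresponding inner product. Order the rows and columns with the vectors $\{\c_S:\ S\in\S(A,d)\}$ first (in some fixed ordering of the $t=\binom{a}{d}$ subsets) and then $\{\c_p:\ p\in L\}$. In the upper-left $t\times t$ block, the factor of $2$ cancels the $1/2$, yielding $G_t$ from Proposition \ref{JOHN} with $v=a$; its diagonal entries equal $1$ (matching $\la\c_S,\c_S\ra=1$, since $A_0=I$) and its off-diagonal entry at $(S,T)$ equals $1/(k+1)$ whenever $|S\cap T|=d-k$, matching condition (a). The two rectangular off-diagonal blocks consist of all $1/2$'s, matching condition (b). The lower-right $\ell\times\ell$ block $\tfrac12(I_\ell+J_\ell)$ has diagonal entries $1$ (matching $\la\c_p,\c_p\ra=1$) and off-diagonal entries $1/2$, matching condition (c). Thus $\G$ is exactly the putative Gram matrix of any vector configuration satisfying the hypotheses of Proposition \ref{GRAM}.

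The conclusion follows immediately: if the vectors exist then $\G$ is their Gram matrix, hence PSD; conversely, if $\G$ is PSD then writing $\G=C^{\mathsf{T}}C$ (via a Cholesky-type factorization) the columns of $C$ furnish vectors realizing the required inner products, and Proposition \ref{GRAM} then yields the SOS decomposition. The main obstacle here is essentially nil, since the substantive combinatorial work has already been absorbed into Lemma \ref{DELTA-M}, Lemma \ref{SYM-SYSTEM}, and Proposition \ref{GRAM}; the corollary is just the translation of those results into the language of positive semidefiniteness, which is the form needed for the ensuing analysis of when $\G\succeq 0$.
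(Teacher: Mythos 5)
Your proposal is correct and follows the same route as the paper, which treats this corollary as an immediate consequence of Proposition \ref{GRAM} once the prescribed inner products in (a), (b), (c) are identified entrywise with the matrix $\G$ of (\ref{GRAM-MAT}) and one invokes the standard equivalence between positive semidefinite matrices and Gram matrices of vector configurations. Your explicit entry-by-entry check and the factorization $\G=C^{\mathsf{T}}C$ for the converse just spell out what the paper leaves implicit.
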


\subsection{D\'enouement}
The matrix $I_\ell + J_\ell$ has eigenvalues $\ell+1$ of multiplicity one and
$1$ of multiplicity $\ell-1$, and thus is positive definite and hence
invertible. The matrix $\G$ is thus positive semidefinite if and only if the Schur complement
$$
C_t = G_t - \frac{1}{2} J_{t\times\ell}(I_\ell + J_\ell)^{-1}J_{\ell\times t}
$$ 
is positive semidefinite (see item (0.8.5) of \cite{HJ}).  One easily checks
that $(I_\ell+J_\ell)^{-1} = I_\ell - (\ell+1)^{-1}J_\ell$, and that
the Schur complement in question is
\begin{equation} \label{SCHUR-C}
C_t = G_t -\frac{\ell}{2\ell+2}J_t.
\end{equation}

\begin{LMA} \label{PSD-INEQ}
With the notation above, $\Delta M\{e,f\}$ is a SOS form if and only if
$\frac{1}{d+1}\binom{a+1}{d}\geq \frac{\ell}{2\ell+2}\binom{a}{d}$.
\end{LMA}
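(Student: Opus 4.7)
The plan is to convert the positive semidefiniteness of the Schur complement $C_t = G_t - \frac{\ell}{2\ell+2} J_t$ from (\ref{SCHUR-C}) into a single inequality on a distinguished eigenvalue. By Corollary \ref{GRAM-CORO} together with the Schur complement identity, $\Delta M\{e,f\}$ is SOS exactly when $C_t\succeq 0$, so the task is to pinpoint the unique direction in which $C_t$ can fail to be PSD.

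First, I would observe that $G_t$ is a linear combination of the adjacency matrices of the Johnson scheme $J(a,d)$, and so is the all-ones matrix $J_t = \sum_{k=0}^{s} A_k$. In particular $G_t$ is symmetric and $\one$ is an eigenvector by Proposition \ref{JOHN}, so $\one^\perp$ is a $G_t$-invariant subspace. Since $G_t$ is PSD on all of $\RR^t$, its restriction to $\one^\perp$ is PSD as well. At the same time, $J_t = \one\one^T$ annihilates $\one^\perp$, so $C_t$ agrees with $G_t$ on $\one^\perp$ and is automatically PSD there.

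The only remaining eigenvalue to inspect is the one on $\one$. Proposition \ref{JOHN} gives $G_t\one = \frac{1}{d+1}\binom{a+1}{d}\one$, and directly $J_t\one = \binom{a}{d}\one$, so
$$
C_t\one = \left(\frac{1}{d+1}\binom{a+1}{d} - \frac{\ell}{2\ell+2}\binom{a}{d}\right)\one,
$$
and $C_t\succeq 0$ if and only if this scalar is nonnegative, giving the stated inequality. The argument is almost free once one notices that $J_t$ kills $\one^\perp$; the only real item to verify is that $\one^\perp$ is $G_t$-invariant, which follows because $G_t$ is symmetric and $\one$ is a $G_t$-eigenvector. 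I do not anticipate a genuine obstacle here, as the Bose--Mesner structure has already done all the heavy lifting in earlier sections.
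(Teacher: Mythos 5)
Your proposal is correct and is essentially the paper's argument: the paper likewise reduces via Corollary \ref{GRAM-CORO} and the Schur complement to $C_t=G_t-\frac{\ell}{2\ell+2}J_t\succeq 0$, and then exploits that $\one$ is the only direction where $J_t$ acts nontrivially, using Proposition \ref{JOHN} for the eigenvalue $\frac{1}{d+1}\binom{a+1}{d}$ and for $G_t\succeq 0$. The only cosmetic difference is that the paper chooses a full orthogonal eigenbasis of $G_t$ containing $\one$, whereas you work directly with the orthogonal $C_t$-invariant decomposition $\mathrm{span}(\one)\oplus\one^{\perp}$; both hinge on the same facts.
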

\begin{proof}
By Corollary \ref{GRAM-CORO}, it suffices to determine when the matrix $\G$ of
(\ref{GRAM-MAT}) is positive semidefinite.  By the remarks of this section,
it suffices to determine when the matrix $C_t$ in (\ref{SCHUR-C}) is positive
semidefinite.  Let $\{\u_1,...,\u_t\}$ be an orthogonal basis of $\RR^t$
consisting of eigenvectors for $G_t$, with $\u_1=\one$, and let
$\theta_1=\frac{1}{d+1}\binom{a+1}{d},\theta_2,...,\theta_t$ be the
corresponding eigenvalues;\ by Proposition \ref{JOHN}, $\theta_i\geq 0$ for all
$1\leq i\leq t$.  Since this basis is orthogonal, the $\{\u_i\}$ are
eigenvectors of $J_t$ as well, with corresponding eigenvalues
$\xi_1=\binom{a}{d}$ and $\xi_i=0$ for all $2\leq i\leq t$.  Thus, the
$\{\u_i\}$ are an orthogonal basis of $\RR^t$ consisting of eigenvectors of
$C_t$, with corresponding eigenvalues $\theta_i-\ell\xi_i/(2\ell+2)$ for
$1\leq i\leq t$.  These eigenvalues are all nonnegative if and only if
$\theta_1\geq \ell\xi_1/(2\ell+2)$.  This proves the result.
\end{proof}

\begin{proof}[Proof of Theorem \ref{MAIN}]
Using $\binom{a+1}{d}=\frac{a+1}{a+1-d}\binom{a}{d}$ and $a+1-d\geq 0$, elementary
calculation shows that the inequality of Lemma \ref{PSD-INEQ} is equivalent to
$$
a((r-2)\ell-2) \leq r((r-2)\ell-2)+2(r+\ell+1).
$$
When $(r,\ell)$ is one of $(3,1)$, $(3,2)$, or $(4,1)$ the factor
$(r-2)\ell-2\leq 0$ is nonpositive and the inequality holds.  In all other
cases $(r-2)\ell-2>0$ is positive, and the inequality is equivalent to
$a\leq \A(r,\ell)$, as claimed.  By Lemma \ref{PSD-INEQ}, this establishes the
first claim of the theorem.

We prove that $\M(r,\ell,a)$ is strongly Rayleigh when either
$(r-2)\ell\leq 2$ or $a\leq \A(r,\ell)$ by induction on $\ell$.
For the basis of induction it is convenient to take the degenerate
case $\ell=0$;\ then $\M(r,0,a)=\U_{r,a+2}$ is a uniform matroid, which is strongly
Rayleigh by Proposition \ref{UNIFORM}.  For the induction step, assume that
$\ell\geq 1$, and that if $\M(r,\ell-1,a)$ satisfies either $\ell-1=0$,
$(r-2)(\ell-1)\leq 2$, or $a\leq \A(r,\ell-1)$, then
$\M(r,\ell-1,a)$ is strongly Rayleigh.  Let $\M=\M(r,\ell,a)$
be such that either $(r-2)\ell\leq 2$ or $a\leq \A(r,\ell)$.

By the first claim of the theorem, $\Delta M\{e,f\}$ is a SOS form, hence PSD.
By Proposition \ref{BWW}(d), to complete the proof it suffices to show that $M_e$,
$M^e$, $M_f$, and $M^f$ are strongly Rayleigh.  By symmetry, it suffices to
show that $M_e$ (the basis enumerator of the contraction $\M/e$)
and $M^e$ (the basis enumerator of the deletion $\M\drop e$) are strongly Rayleigh.

The contraction $\M/e$ is the uniform matroid $\U_{r-1,a+1}$ with the point
corresponding to the image of $f$ fattened to a parallel class of size
$\ell+1$.  The simplification of $\M/e$ is thus $\U_{r-1,a+1}$, which is strongly
Rayleigh, with basis enumerator $e_{r-1}(A\cup\{f\})$.  The basis enumerator
$M_e$ of $\M/e$ is obtained from $e_{r-1}(A\cup\{f\})$ by substituting
$y_f=y_1+\cdots+y_{\ell+1}$ for new variables $\{y_1,...,y_{\ell+1}\}$.
One can check that this operation preserves the strong
Rayleigh property.  (In general, a matroid is strongly Rayleigh if and only if
its simplification is strongly Rayleigh.)  Thus, $M_e$ is strongly Rayleigh.

For the deletion $\M\drop e$, a short calculation shows that
$$
\frac{\partial}{\partial\ell}\A(r,\ell) = \frac{-2r(r-1)}{((r-2)\ell-2)^2}<0
$$
since $r\geq 3$.  Therefore, $\M\drop e = \M(r,\ell-1,a)$ satisfies either
$\ell-1=0$,  $(r-2)(\ell-1)\leq 2$, or $a\leq\A(r,\ell)<\A(r,\ell-1)$.
In any case the induction hypothesis applies, so that $M^e$ is strongly Rayleigh.

This completes the induction step, and the proof.
\end{proof}

\section{Concluding Remarks.}

For any matroid, stability of the basis enumerator is a complex analytic criterion 
for all the Rayleigh differences to be PSD forms, while the method of Section
2.3 is a geometric criterion for some Rayleigh difference to be a SOS form.
It is a fascinating interaction.

The strategy of our proof can naturally be extended to more complicated cases.
Among these, the following simple matroids are perhaps tractable.

(i)\ $\M$ consists of $\ell+2\geq t+1$ points in general position on a flat of rank $t$,
extended by $a\geq r-t$ points in general position (relative to the flat) in rank $r$.
(The case we consider is $t=2$.)  The Young subgroup has two orbits on points,
but the putative Gram matrix as in Section \ref{GRAM-SECT}
has a more complicated block structure.  Determining whether the basis
enumerator is stable, as in Theorem \ref{TWOFLATS}, involves determining whether or not a
particular univariate polynomial of degree $t$ has only real roots.

(ii)\ Represented over the reals, $\M$ consists of $a\geq r$ points in a subspace $U$
of dimension $r-1$, and  $b\geq s$ points in a subspace $V$ of dimension $s-1$, in
as general position as possible subject to $\dim(U\cap V)=c$.  (When $c\in\{0,1\}$ this is
a direct sum or $2$-sum of uniform matroids, and hence is strongly Rayleigh.
When $c=s-1=t$ this reduces to (i) above.)

(iii)\ $\M$ consists of $c\geq r$ copunctal lines in rank $r\geq 3$, with lines
and points in as general position as possible.  In the case $c=r=3$ this is known
to be strongly Rayleigh -- see Corollary 10.3 and Example 10.4 of \cite{COSW}.

\end{document}